\newtheorem{theorem}{Theorem}[section]
\newtheorem*{theorem*}{Theorem}
\newtheorem{corollary}[theorem]{Corollary}
\newtheorem{definition}[theorem]{Definition}
\newtheorem{example}[theorem]{Example}
\newtheorem{remark}[theorem]{Remark}
\newtheorem{conjecture}{Conjecture}
\newtheorem{question}{Question}
\newtheorem{claim}{Claim}
\newenvironment{customthm}[1]
  {\innercustomthm}
  {\endinnercustomthm}
\newcommand{\C}{\mathbb C}
\newcommand{\R}{\mathbb R}
\newcommand{\N}{\mathbb N}
\begin{document}

\title[On Lipschitz Geometry at infinity of complex analytic sets]
{On Lipschitz Geometry at infinity of complex analytic sets}

\author[J. E. Sampaio]{Jos\'e Edson Sampaio}

\address{Jos\'e Edson Sampaio: Departamento de Matem\'atica, Universidade Federal do Cear\'a,
	      Rua Campus do Pici, s/n, Bloco 914, Pici, 60440-900, 
	      Fortaleza-CE, Brazil. \newline  
              E-mail: {\tt edsonsampaio@mat.ufc.br}
}

\keywords{Lipschitz regularity, Algebraicity, Characterizing algebraic sets, Moser's Bernstein Theorem}

\subjclass[2010]{32S20; 14B05; 32A15; 32S50}
\thanks{
The author was partially supported by CNPq-Brazil grant 303811/2018-8.
}

\begin{abstract}
In this article, we study the Lipschitz Geometry at infinity of complex analytic sets and we obtain results on algebraicity of analytic sets and on Bernstein's problem.
Moser's Bernstein Theorem says that a minimal hypersurface which is a graph of an entire Lipschitz function must be a hyperplane. H. B. Lawson, Jr. and R. Osserman presented examples showing that an analogous result for arbitrary codimension is not true. In this article, we prove a complex non-parametric version of Moser's Bernstein Theorem. More precisely, we prove that any entire complex analytic set in $\mathbb{C}^n$ which is Lipschitz regular at infinity must be an affine linear subspace of $\mathbb{C}^n$. In particular, a complex analytic set which is a graph of an entire Lipschitz function must be affine linear subspace. That result comes as a consequence of the following characterization of algebraic sets, which is also proved here: if $X$ and $Y$ are entire complex analytic sets which are bi-Lipschitz homeomorphic at infinity then $X$ is a complex algebraic set if and only if $Y$ is a complex algebraic set too. Thus, an entire complex analytic set is a complex algebraic set if and only if it is bi-Lipschitz homeomorphic at infinity to a complex algebraic set. No restrictions on the singular set, dimension nor codimension are required in the results proved here. 
\end{abstract}

\maketitle

\section{Introduction}

Looking to scrutinize global Lipschitz Geometry of complex analytic sets in some sense, we arrived in the following definition (which was recently studied in \cite{BobadillaFS:2018}, \cite{FernandesS:2020}, \cite{Sampaio:2019} and \cite{Targino:2019}):

\begin{definition}
Let $X\subset \R^n$ and $Y\subset\R^m$ be two subsets. We say that $X$ and $Y$ are {\bf bi-Lipschitz homeomorphic at infinity}, if there exist compact subsets $K\subset\R^n$ and $\widetilde K\subset \R^m$ and a bi-Lipschitz homeomorphism $\phi \colon X\setminus K\rightarrow Y\setminus \widetilde K$. In this case, we also say that $X$ and $Y$ are {\bf bi-Lipschitz equivalent at infinity}.
\end{definition}
Let us recall the definitions of Lipschitz and bi-Lipschitz mappings.
\begin{definition}
Let $X\subset\R^n$ and $Y\subset\R^m$. A mapping $f\colon X\rightarrow Y$ is called {\bf Lipschitz} if there exists $\lambda >0$ such that is
$$\|f(x_1)-f(x_2)\|\le \lambda \|x_1-x_2\|$$ for all $x_1,x_2\in X$. A Lipschitz mapping $f\colon X\rightarrow Y$ is called
{\bf bi-Lipschitz} if its inverse mapping exists and is Lipschitz.
\end{definition}

The equivalence class of a set $X$ in the above equivalence relation is called the {\bf Lipschitz Geometry at infinity of $X$}. 
In this article, we present some implications of the study on Lipschitz Geometry at infinity of complex analytic sets to the studies on algebraicity and rigidity of complex analytic sets.


About algebraicity of complex analytic sets, maybe the most important result is the famous Chow's Theorem, which says that any closed complex analytic subset of a complex projective space is a complex algebraic subset. Chow's Theorem has many important consequences and versions and is the basis for applying analytic methods in Algebraic Geometry. An important Chow-type theorem, which is used here, is the result known by Stoll-Bishop's Theorem, which says that a entire complex analytic set (i.e. a set $X\subset \C^n$ such that there exists a family of entire functions $\mathcal{F}$ such that $X=\{x\in \C^n; f(x)=0$ for all $f\in\mathcal{F}\}$) is an algebraic set if and only if its volume growth is bounded (this and other versions of Chow's Theorem can be seen, for instance, in \cite{Bishop:1964}, \cite{Rudin:1967}, \cite{Stoll:1964a}, \cite{Stoll:1964b}, \cite{Tworzewski:1981} and the references cited therein). In order to be more precise, Stoll-Bishop's Theorem says the following:
\begin{theorem}[Stoll-Bishop's Theorem (see \cite{Stoll:1964a,Stoll:1964b,Bishop:1964})]\label{Stoll-Bishop}
Let $Z\subset\C^k$ be a pure $d$-dimensional entire complex analytic subset. Then $Z$ is a complex algebraic set if and only if there exists a constant $R>0$ such that 
$$
\frac{\mathcal{H}^{2d}(Z\cap \overline{B}_r^{2k}(0))}{r^{2d}}\leq R, \quad \mbox{ for all } r>0,
$$
where $\mathcal{H}^{2d}(Z\cap \overline{B}_r^{2k}(0))$ denotes the $2d$-dimensional Hausdorff measure of $Z\cap \overline{B}_r^{2k}(0)=\{x\in Z;\|x\|\leq r\}$.
\end{theorem}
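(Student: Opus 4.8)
The plan is to prove both implications by passing to the projective compactification $\mathbb{CP}^k\supset\C^k$ and invoking Chow's Theorem, using the volume growth to control the analytic behaviour at the hyperplane at infinity $H_\infty=\mathbb{CP}^k\setminus\C^k$. The basic tool throughout is Wirtinger's formula, which identifies the $2d$-dimensional Hausdorff measure of a complex analytic set with $\int \frac{\omega^d}{d!}$, where $\omega=\frac{i}{2}\sum_j dz_j\wedge d\bar z_j$ is the standard Kähler form. This is what converts the metric quantity $\mathcal{H}^{2d}$ into something accessible by integral-geometric and current-theoretic estimates.

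For the direction \emph{algebraic $\Rightarrow$ bounded growth}, suppose $Z$ is algebraic of pure dimension $d$, and let $\delta$ be the degree of its projective closure $\overline Z\subset\mathbb{CP}^k$. By definition of degree, a generic complex affine $(k-d)$-plane meets $Z$ in exactly $\delta$ points, and meets $Z\cap\overline B^{2k}_r(0)$ in at most $\delta$ points. The complex integral-geometric (Crofton-type) formula expresses $\mathcal{H}^{2d}(Z\cap\overline B^{2k}_r(0))$ as a constant times the average, over translates of such complementary-dimension planes through the ball, of this counting function; since the counting function is bounded by $\delta$, one obtains $\mathcal{H}^{2d}(Z\cap\overline B^{2k}_r(0))\le C\,\delta\,r^{2d}$ for a universal constant $C$, which furnishes the required $R$.

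For the converse --- the substantial direction due to Stoll and Bishop --- suppose the ratio is bounded by $R$. The goal is to show that the closure $\overline Z$ of $Z$ in $\mathbb{CP}^k$ is a complex analytic subset; once this is known, Chow's Theorem makes $\overline Z$ algebraic, whence $Z=\overline Z\cap\C^k$ is algebraic. To prove analyticity of $\overline Z$ across $H_\infty$, I would apply Bishop's extension theorem: an analytic set of pure dimension $d$ defined off a hyperplane, having locally finite $2d$-dimensional Hausdorff measure up to that hyperplane, has analytic closure there. The remaining, and hardest, step is to verify this local finiteness at infinity. Working in an affine chart of $\mathbb{CP}^k$ centred at a point of $H_\infty$, the transition is an inversion of the form $\iota(z)=z/\|z\|^2$, which is conformal with factor $\|z\|^{-2}$, so it scales $2d$-dimensional measure by $\|z\|^{-4d}$; moreover, near $H_\infty$ the Fubini--Study volume form is comparable to the Euclidean one in these coordinates. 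Hence the transformed measure near $H_\infty$ is comparable to $\int_{Z\cap\{\|z\|\ge r\}}\|z\|^{-4d}\,d\mathcal{H}^{2d}$, and decomposing $\{\|z\|\ge r\}$ into dyadic annuli $A_j=\{2^j\le\|z\|<2^{j+1}\}$, the growth bound gives $\mathcal{H}^{2d}(Z\cap A_j)\lesssim R\,2^{2dj}$ while $\|z\|^{-4d}\lesssim 2^{-4dj}$ on $A_j$, so the contribution of $A_j$ is $\lesssim R\,2^{-2dj}$ and the convergent geometric series yields finite transformed measure at $H_\infty$. This dyadic summation, together with the careful comparison of the Fubini--Study and Euclidean forms under inversion, is where the main work lies, since it is precisely the mechanism turning global polynomial growth in $\C^k$ into the local finite-mass hypothesis needed to extend $\overline Z$ analytically across the hyperplane at infinity.
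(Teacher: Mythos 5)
First, note that the paper does not prove Theorem \ref{Stoll-Bishop} at all: it is quoted as a classical result of Stoll and Bishop and used as a black box, so there is no internal proof to compare against. Your overall architecture (compactify, show $\overline Z\subset \C P^k$ has locally finite $2d$-measure near $H_\infty$, apply Bishop's extension theorem and then Chow) is indeed the standard route, and the easy direction via degree and the complex Crofton formula is fine. But the key estimate in your hard direction contains a genuine error.

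The gap is the claim that the chart transition near $H_\infty$ is the conformal inversion $z\mapsto z/\|z\|^2$ with factor $\|z\|^{-2}$, so that $2d$-dimensional measure transforms by $\|z\|^{-4d}$. The actual transition maps of $\C P^k$ are $(z_1,\dots,z_k)\mapsto(1/z_1,\,z_2/z_1,\dots,z_k/z_1)$, and the Fubini--Study metric in the affine chart, $g_{FS}=\bigl((1+\|z\|^2)\|dz\|^2-|\langle z,dz\rangle|^2\bigr)/(1+\|z\|^2)^2$, is \emph{not} conformal to the Euclidean metric: it contracts lengths by $\approx\|z\|^{-2}$ only in the complex radial direction, and by $\approx\|z\|^{-1}$ in the directions Hermitian-orthogonal to $z$. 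Hence the pointwise worst case for the $2d$-volume density of $\omega_{FS}^d$ restricted to a $d$-dimensional tangent plane is $\approx(1+\|z\|^2)^{-d}$, attained when the tangent plane is orthogonal to $z$, not $\|z\|^{-4d}$. With the correct worst-case factor your dyadic annuli $A_j$ each contribute $\mathcal{H}^{2d}(Z\cap A_j)\cdot 2^{-2dj}\lesssim R$, a constant per annulus, and the series diverges. (Already for a linear $d$-plane $E$ through $0$ one has $\int_E(1+\|z\|^2)^{-d}\,d\mathcal{H}^{2d}=\infty$, while its Fubini--Study volume is of course finite --- the finiteness there comes from the tangent planes containing the radial direction, a geometric fact your pointwise bound cannot see for a general $Z$.) The finiteness of $\int_{Z\setminus B_1}\omega_{FS}^d$ under the hypothesis $\nu(r):=\mathcal{H}^{2d}(Z\cap B_r)/r^{2d}\leq R$ is true, but it is not obtained by a pointwise comparison of measures plus summation over annuli; it requires the Lelong--Jensen / Stokes identity
$$
\frac{1}{d!}\int_{Z\cap(B_{R'}\setminus B_{r})}\bigl(dd^c\log\|z\|\bigr)^d \;=\; c_d\bigl(\nu(R')-\nu(r)\bigr),
$$
combined with $\omega_{FS}=\tfrac12 dd^c\log(1+\|z\|^2)\approx dd^c\log\|z\|$ at infinity. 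This monotonicity identity is precisely the substance of Stoll's and Bishop's arguments, and it is the step your proposal replaces with an incorrect local estimate.
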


In this article, we prove the following Chow-type theorem that relates algebraicity of analytic sets with Lipschitz Geometry at infinity of complex analytic sets. 

\begin{customthm}{\ref*{chow-type-thm}}
Let $X\subset\C^n$ be a pure $d$-dimensional entire complex analytic subset. Then the following statements are mutually equivalent:
\begin{enumerate}
 \item [(1)] $X$ is a complex algebraic set.
 \item [(2)] $X$ has a unique tangent cone at infinity which is a $d$-dimensional complex algebraic set;
 \item [(3)] $X$ is bi-Lipschitz homeomorphic at infinity to a complex algebraic set.
\end{enumerate}
\end{customthm}

We are going to address the notion of tangent cone at infinity in Subsection \ref{cones}.

Another aim of this article is to show that Lipschitz Geometry at infinity of complex analytic sets is also related with the famous Bernstein's problem. In order to know, the celebrated theorem due to J. Moser in \cite{Moser:1961}, called Moser's Bernstein Theorem, says that a minimal hypersurface which is a graph of an entire Lipschitz function must be a hyperplane. So, it is natural to  ask  whether  Moser's theorem  can  be  generalized to the higher codimensional case.   In other words, it is natural to ask the following:
\begin{question}\label{question-moser-gen}
Given a minimal variety $M\subset \R^{n+m}$ which is the graph of Lipschitz mapping $f\colon\R^n\to\R^m$, is $M$ an affine linear subspace?
\end{question}

In general, Question \ref{question-moser-gen} has a negative answer. Lawson and Osserman in \cite{LawsonO:1977} presented a minimal cone which is the graph of a Lipschitz mapping, but is not an affine linear subspace, more precisely, they presented the following:
\begin{example}[Theorem 7.1 in \cite{LawsonO:1977}]\label{example:LO}
The graph of the Lipschitz mapping $f\colon \R^4\to \R^3$ given by
$$
f(x)=\frac{\sqrt{5}}{2}\|x\|\eta\left(\frac{x}{\|x\|}\right), \quad \forall x\not=0,
$$
is a minimal cone, where $\eta\colon \mathbb{S}^3\to \mathbb{S}^2$ is the Hopf mapping given by
$$
\eta(z_1,z_2)=(|z_1|^2-|z_2|^2,2z_1\bar z_2).
$$
\end{example}
However, several mathematicians gave positive answers to Question \ref{question-moser-gen} with additional conditions or approached some version of Question \ref{question-moser-gen}  (for example, see \cite{JostX:1999}, \cite{JostXY:2016}, \cite{JostXY:2018} and \cite{Xin:2005}).

Recently, in \cite{FernandesS:2020} was proved some Bernstein-type theorems in higher codimension for complex analytic sets. For example, it was proved that a complex algebraic set which is Lipschitz regular at infinity (see Definition \ref{def:Lip_regular}) must be an affine linear subset. 
So, we have the following natural question:

\begin{question}\label{question-regularity}
If $X$ is a pure dimensional entire complex analytic set which is Lipschitz regular at infinity then is $X$ an affine linear subspace?
\end{question}

In this article, we prove the following result, which answers positively Question \ref{question-regularity}.
\begin{customthm}{\ref*{main theorem}}
Let $Z\subset\C^n$ be a pure $d$-dimensional entire complex analytic set. If $Z$ is Lipschitz regular at infinity, then $Z$ is an affine linear subspace of $\C^n$.
\end{customthm}

We provide also the following Bernstein-type theorem in higher codimension:
\begin{customthm}{\ref*{gen_Moser_one}}
Let $X\subset\C^{d+n}$ be a $d$-dimensional entire complex analytic subset and $K\subset \C^d$ be a compact set such that $X\setminus (K\times \C^n)$ is the graph of a mapping $\varphi\colon \C^d\setminus K\to \C^n$. 
Assume that 
\begin{itemize}
 \item [(i)] $K=\emptyset$ when $d=1$,
\end{itemize}
and suppose that there exist a constant $C>0$, $k\in \mathbb{N}$ and a sequence $\{t_j\}_{j\in \N}$ of real positive numbers such that $t_j\to +\infty$ satisfying the following
\begin{itemize}
 \item [(ii)] $\|\varphi(x)\|\leq C t_j^k$, whenever $x\in \{u\in \C^d\setminus K; \|u\|= t_j\}$ and
 \item [(iii)] $\varphi|_{B_{t_j}(0)\setminus K}$ is a bounded mapping, for all $j\in \N$.
\end{itemize}
Then $X$ contains a $d$-dimensional complex algebraic set of $\C^{d+n}$ and $\varphi$ is the restriction of a polynomial mapping with degree at most $k$. Additionally, if $X$ is also an irreducible set then $X$ is a complex algebraic set with degree at most $k$ and, in particular, if $k=1$ then $X$ is an affine linear subspace of $\C^{d+n}$.

\end{customthm}
In particular, we obtain a positive answer to Question \ref{question-moser-gen} when $M$ is a complex analytic set (see Corollary \ref{gen_Moser_two}). 


Notice that, we address complex analytic sets which are not necessarily graph of smooth functions; a priori, they are not supposed  even smooth and in Theorem \ref{gen_Moser_one}, 
we do not impose that $\varphi$ is even a continuous mapping.
Let us remark also that Theorem \ref{gen_Moser_one} does not hold true in general (see Remark \ref{rem:gen_Moser_one_sharp}), if we remove one of the conditions (i)-(iii), and Example \ref{example:LO} shows that Theorem \ref{gen_Moser_one} does not hold true for closed sets which minimizes volume (stable varieties).

\section{Preliminaries}\label{preliminaries}
All the subsets of $\R^n$ (or $\C^n$) are considered equipped with the induced Euclidean metric.

\subsection{Lipschitz regularity at infinity}
\begin{definition}\label{def:Lip_regular}
A subset $X\subset\R^n$ is called {\bf Lipschitz regular at infinity} if $X$ and $\R^k$ are bi-Lipschitz homeomorphic at infinity, for some $k\in\N$.
\end{definition}

\begin{example}\label{real_example} Let $X\subset\R^{n+1}$ $(n>0)$ be defined by $ X = \{ (x_1,...,x_n,x_{n+1})\in\R^{n+1}; \ x^2+...+x_n^2=x_{n+1}^3 \}.$ We see that $X$ is a real algebraic subset of $\R^{n+1}$ with an isolated singularity at $0\in\R^{n+1}.$ By using the mapping $\pi\colon X\rightarrow\R^n$; $\pi (x_1,...,x_n,x_{n+1})=(x_1,...,x_n)$, it is easy to see that $X$ is Lipschitz regular at infinity.
\end{example}

\begin{example} Let $Y\subset\C^2$ be defined by $ Y = \{ (x,y)\in\C^2; \ x^2=y \}.$ We see that $Y$ is a smooth algebraic subset of $\C^2$. From another way,  $Y$ is not Lipschitz regular at infinity. 
\end{example}

%

%
%
%
%

\subsection{Tangent cone at infinity}\label{cones}

\begin{definition}
Let $X\subset \R^m$ be a subset such that $p\in \overline{X}$. Given a sequence of real positive numbers $\{ t_j \}_{j\in \N}$ such that $t_j\to +\infty$, we say that $v\in \R^m$ is {\bf tangent to $X$ at infinity with respect to $\{t_j\}_{j\in \N}$} if there is a sequence of points $\{x_j\}_{j\in \N}\subset X$ such that $\lim\limits _{j\to +\infty }\frac{1}{t_j}x_j=v$.
\end{definition}

\begin{definition}\label{def:unique_tg_cone}
Let $X\subset \R^m$ be a subset such that $p\in \overline{X}$ and $T=\{t_j\}_{j\in \N}$ be a sequence of real positive numbers $\{ t_j \}_{j\in \N}$ such that $t_j\to +\infty$. We denote the set of all vectors which are tangents to X at infinity w.r.t. $T$ by $C^T_{\infty}(X)$. When $C^T_{\infty}(X)$ and $C^S_{\infty}(X)$ coincide for any other sequence of real positive numbers $S=\{ s_j \}_{j\in \N}$ such that $s_j\to +\infty$, we denote $C^T_{\infty}(X)$ by $C_{\infty}(X)$ and we call it the {\bf tangent cone of $X$ at infinity}.
\end{definition}

Let us remark that a tangent cone of a set $X$ at infinity can be non-unique as we can see in the following example.
\begin{example}
Let $X=\{(x,y)\in\R^2;\, y\cdot\sin ({\rm log} (x^2+y^2+1))=0\}$. For each $j\in \N$, we define $t_j=(e^{j\pi}-1)^{\frac{1}{2}}$ and $s_j=(e^{j\pi+\pi/2}-1)^{\frac{1}{2}}$. Thus, for $T=\{t_j\}_{j\in \N}$ and $S=\{s_j\}_{j\in \N}$, we have $(0,1)\in C^T_\infty(X)\setminus C^S_\infty(X)$ and, thus, $C^T_\infty(X))\not=C^S_\infty(X)$.
\end{example}
See in \cite{SampaioS:2021} some characterizations of sets with unique tangent cones.

As it was already said in \cite{FernandesS:2020}, in general, it is not an easy task to verify whether unbounded subsets have a unique tangent cone at infinity, even in the case of some classes of analytic subsets, for instance, concerning to such a problem, there is a still unsettled conjecture by Meeks III (\cite{Meeks:2005}, Conjecture 3.15) stating that: {\it any properly immersed minimal surface in $\R^3$ of quadratic area growth has a unique tangent cone at infinity}.

%
%

\section{Algebraicity and Lipschitz Geometry at infinity: A result like Chow's Theorem}

\begin{theorem}\label{chow-type-thm}
Let $X\subset\C^n$ be a pure $d$-dimensional entire complex analytic subset. Then the following statements are mutually equivalent:
\begin{enumerate}
 \item [(1)] $X$ is a complex algebraic set.
 \item [(2)] $X$ has a unique tangent cone at infinity which is a $d$-dimensional complex algebraic set;
 \item [(3)] $X$ is bi-Lipschitz homeomorphic at infinity to a complex algebraic set.
\end{enumerate}
\end{theorem}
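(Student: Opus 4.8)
The plan is to anchor every implication on Stoll--Bishop's Theorem (Theorem~\ref{Stoll-Bishop}) together with Lelong's monotonicity, which guarantees that for a pure $d$-dimensional complex analytic set the quotient $v(r):=\mathcal{H}^{2d}(X\cap\overline{B}_r^{2n}(0))/(\omega_{2d}r^{2d})$ is nondecreasing in $r$; hence its supremum equals its limit at infinity, and by Theorem~\ref{Stoll-Bishop} algebraicity is equivalent to the finiteness of the \emph{density at infinity} $\Theta_\infty(X):=\lim_{r\to\infty}v(r)\in(0,+\infty]$. With this reduction, I would establish the two loops $(1)\Rightarrow(3)\Rightarrow(1)$ and $(1)\Rightarrow(2)\Rightarrow(1)$, so that all three statements become equivalent.

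The implication $(1)\Rightarrow(3)$ is immediate via the identity map. For $(3)\Rightarrow(1)$, let $\phi\colon X\setminus K\to Y\setminus\widetilde K$ be $\lambda$-bi-Lipschitz onto an algebraic set $Y$, with $\phi^{-1}$ having Lipschitz constant $\mu$. Since bi-Lipschitz maps preserve Hausdorff dimension, $Y$ has real dimension $2d$, that is, complex dimension $d$. From $\|\phi(x)\|\le C+\lambda\|x\|$ one gets $X\cap\overline{B}_r\setminus K\subseteq\phi^{-1}\bigl(Y\cap\overline{B}_{C+\lambda r}\setminus\widetilde K\bigr)$, and applying the Lipschitz constant of $\phi^{-1}$ yields $\mathcal{H}^{2d}(X\cap\overline{B}_r\setminus K)\le\mu^{2d}\,\mathcal{H}^{2d}(Y\cap\overline{B}_{C+\lambda r})$. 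As $Y$ is algebraic, Theorem~\ref{Stoll-Bishop} bounds the right-hand side by a constant times $r^{2d}$; absorbing the bounded contribution $\mathcal{H}^{2d}(X\cap K)$ gives $\Theta_\infty(X)<\infty$, whence $X$ is algebraic by Theorem~\ref{Stoll-Bishop} again.

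For $(1)\Rightarrow(2)$, I would pass to the projective closure $\overline{X}\subset\mathbb{P}^n$ and set $X_\infty:=\overline{X}\cap H_\infty$, the intersection with the hyperplane at infinity, a complex algebraic subset of $H_\infty\cong\mathbb{P}^{n-1}$ of dimension $d-1$; its affine cone is then a $d$-dimensional complex algebraic cone. Uniqueness of the tangent cone follows because any $v\neq0$ tangent to $X$ at infinity arises from a sequence $x_j\in X$ with $x_j/t_j\to v$, so in homogeneous coordinates $[x_j:1]=[x_j/t_j:1/t_j]\to[v:0]$, and closedness of $\overline{X}$ forces $[v:0]\in X_\infty$. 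Hence $C^T_\infty(X)$ is contained in, and in fact equals, this single cone independently of $T$. (That a closed analytic cone is algebraic is itself a consequence of Chow's theorem applied to its projectivization, so the algebraicity in $(2)$ is automatic.)

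The delicate step is $(2)\Rightarrow(1)$, where the finiteness of the cone's \emph{dimension} must be converted into a volume bound. Writing $X_t:=\tfrac1t X$, the rescaled volume $\mathcal{H}^{2d}(X_t\cap\overline{B}_1)=\mathcal{H}^{2d}(X\cap\overline{B}_t)/t^{2d}$ equals $\omega_{2d}v(t)$, which by monotonicity increases to $\omega_{2d}\Theta_\infty(X)$. Uniqueness of the tangent cone means that for every sequence $t_j\to\infty$ the sets $X_{t_j}$ converge to the fixed $d$-dimensional algebraic cone $C:=C_\infty(X)$; interpreting them as positive closed currents $[X_{t_j}]$, the limit is supported on $C$ with mass in $\overline{B}_1$ finite precisely because $\dim_{\mathbb{C}}C=d$, so $\Theta_\infty(X)$ is finite (equal to $\deg C$) and Theorem~\ref{Stoll-Bishop} yields algebraicity. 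I expect the main obstacle to be making this blow-down rigorous: one must upgrade the set-theoretic convergence $X_{t_j}\to C$ to convergence of the associated currents \emph{with no loss of mass to infinity and no dimensional collapse}, so that the limiting mass genuinely bounds $\Theta_\infty(X)$ rather than merely being bounded by it via lower semicontinuity. Ruling out that $v(t)\to\infty$ while the rescalings still converge to a $d$-dimensional cone is exactly where the hypothesis that the cone has \emph{full} dimension $d$ is essential, and controlling this uniform mass bound is the technical heart of the equivalence.
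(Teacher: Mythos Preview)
Your treatment of $(1)\Leftrightarrow(3)$ matches the paper's almost verbatim: the identity map for one direction, and a bi-Lipschitz transfer of the Stoll--Bishop volume bound for the other. For $(1)\Rightarrow(2)$ your projective-closure argument is also close to the paper's, though the clause ``and in fact equals'' hides real work: showing that every direction $[v:0]\in X_\infty$ is actually realised as a limit $x_j/\|x_j\|$ with $x_j\in X$ requires approximating points of $\overline{X}\cap H_\infty$ by points of $X$ along a curve, for which the paper invokes the semialgebraic Curve Selection Lemma. Without this reverse inclusion you have only $C^T_\infty(X)\subset\mathrm{cone}(X_\infty)$ for every $T$, which does not by itself force all the $C^T_\infty(X)$ to coincide.

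The substantive divergence is in $(2)\Rightarrow(1)$. You propose to pass from set-theoretic convergence $X_{t_j}\to C$ to convergence of the integration currents $[X_{t_j}]$ and then read off $\Theta_\infty(X)=\deg C<\infty$. You correctly diagnose the obstacle and then leave it open: compactness for currents requires a uniform local mass bound, which is precisely the finiteness of $\Theta_\infty(X)$ you are trying to establish, and lower semicontinuity of mass points the wrong way. Nothing you have written rules out $v(t)\to\infty$ while the supports $X_{t_j}$ still Hausdorff-converge to the fixed $d$-dimensional cone $C$; the ``multiplicity'' could simply blow up. So as it stands this step is circular.

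The paper avoids this circularity by a completely different mechanism. Since $C_\infty(X)$ is a $d$-dimensional algebraic cone, after a generic linear change of coordinates every coordinate $d$-plane projection $\pi_\lambda\colon\C^n\to\C_\lambda\cong\C^d$ satisfies $\ker\pi_\lambda\cap C_\infty(X)=\{0\}$. This forces $\pi_\lambda|_X$ to be \emph{proper}: if $\|x_j\|\to\infty$ with $\pi_\lambda(x_j)$ bounded, any cluster point of $x_j/\|x_j\|$ would lie in $C_\infty(X)\cap\ker\pi_\lambda=\{0\}$, a contradiction. Properness makes $\pi_\lambda|_X$ a branched cover of some finite degree $k_\lambda$, and the coarea formula then gives
\[
\mathcal{H}^{2d}(X\cap B_r)\;\le\;\sum_{\lambda}k_\lambda\,\mathcal{H}^{2d}(\C_\lambda\cap B_r)\;\le\;k\binom{n}{d}\mu_{2d}\,r^{2d}
\]
directly, after which Stoll--Bishop finishes. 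In other words, the paper converts the \emph{dimension} hypothesis on the cone into properness of linear projections, and properness into a finite sheet count that bounds volume---no currents, no compactness, no semicontinuity. This is both more elementary and closes exactly the gap you identified as the technical heart.
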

\begin{proof}
Our proof is organized in the following way: we are going to proof that (1) is equivalent to (2), and (1) is equivalent to (3).

\noindent $(1) \Rightarrow(2)$.
By using a global \L ojasiewicz inequality, it was already proved in \cite{LeP:2018} that any complex algebraic set has a unique tangent cone at infinity, which is a complex algebraic set. Here, we present a different proof, without using any \L ojasiewicz inequality.


Let $\pi\colon \C^{n+1}\to \C P^n$ be the canonical projection.
Assume that $X$ is a complex algebraic set. By Corollary 2.16 in \cite{FernandesS:2020}, $X$ has a unique tangent cone at infinity. If $X$ is a bounded set, then $C_{\infty}(X)=\{0\}$, which is an algebraic set. Hence, we can assume that $X$ is an unbounded set. Thus, the closure of $X$ in $\C P^n$, denoted by $\overline{X}$, is a complex algebraic subset of $\C P^n$ (see \cite[Section 7.2, Proposition 1]{Chirka:1989}) and $\widetilde{X}=\pi^{-1}(\overline{X})\cup \{0\}$ is a homogeneous complex algebraic set of $\C^{n+1}$ (see \cite[Section 7.1, proof of Theorem]{Chirka:1989}). 
Let $\tilde p_1,...,\tilde p_r\colon \C^{n+1}\to \C$ be homogeneous polynomials such that $\widetilde{X}=\{(x_0,x)\in \C^{n+1};\tilde p_1(x_0,x)=...=\tilde p_r(x_0,x)=0\}$. In particular, $Z=\overline{X} \cap H_{\infty}$ is a complex algebraic subset of $\C P^n$ satisfying $Z=\{(x_0:x)\in \C P^n; x_0=0$ and $\tilde p_1(x_0,x)=...=\tilde p_r(x_0,x)=0\}$, where $H_{\infty}$ is the hyperplane at infinity, i.e., $H_{\infty}=\{(x_0:x_1:....:x_n)\in \C P^n; x_0=0\}$. For each $i\in \{1,...,r\}$, let $p_i\colon \C^{n}\to \C$ be the polynomial given by $p_i(x)=\tilde p_i(0,x)$ for all $x\in \C^n$ and, thus, we define $C^*(X)=\{x\in \C^{n};p_1(x)=...=p_r(x)=0\}$. In particular, $\widetilde{X}\cap (\{0\}\times \C^n)=\{0\}\times C^*(X)$.


\begin{claim}\label{algebraic_cone}
$C_{\infty}(X)=C^*(X)$.
\end{claim}
\begin{proof}[Proof of Claim \ref{algebraic_cone}]  
Since $C_{\infty}(X)$ and $C^*(X)$ are real cones, i.e., if $v\in C_{\infty}(X)$ (resp. $v\in C^*(X)$) then $\lambda v\in C_{\infty}(X)$ (resp. $\lambda v\in C^*(X)$) for all $t\geq 0$, it is enough to show that $C_{\infty}(X)\cap \mathbb{S}^{2n-1}=C^*(X)\cap \mathbb{S}^{2n-1}$. 
Since $C_{\infty}(X_1\cup X_2)=C_{\infty}(X_1)\cup C_{\infty}(X_2)$ and $C^*(X_1\cup X_2)=C^*(X_1)\cup C^*(X_2)$, we assume that $X$ is an irreducible algebraic set. 

Let $v\in C_{\infty}(X)$ such that $\|v\|=1$. Then there exists a sequence $\{z_j\}_j\subset X$ such that $\lim\limits_{j\to +\infty} z_j=\infty$ and $\lim\limits_{j\to +\infty} \frac{z_j}{\|z_j\|}=v$. Thus, for each $j$, let $L_j=\pi^{-1}(1:z_j)\cup \{0\}$ and $w_j\in L_j\cap \mathbb{S}^{2n+1}$. Since $(\frac{1}{\|z_j\|}:\frac{z_j}{\|z_j\|})\to (0:v)$ as $j\to +\infty$, we obtain, taking a subsequence, if necessary, that $w_j\to (0,\lambda v)$ for some $\lambda \in \C$ with $|\lambda|=1$. Since $\widetilde{X}$ is a homogeneous complex algebraic set, $(0,v)\in \widetilde{X}$ and, thus, $v\in C^*(X)$.
This shows that $C_{\infty}(X)\subset C^*(X)$.


Let $v\in C^*(X) $ be a vector such that $\|v\|=1$. Since $\{0\}\times C^*(X)=\pi^{-1}(Z)\cup \{0\}$, there exists a sequence $\{(x_{0j},y_j)\}_j\subset  \widetilde{X} \setminus (\{0\}\times C^*(X))$ which converges to $(0,v)$. We can assume that $x_{0j}>0$ for all $j$. By Curve Selection Lemma (see \cite[Theorem 3.13]{Coste:2000}), there exists a semialgebraic arc $\gamma\colon [0,\varepsilon)\to \widetilde{X}$ such that $\gamma(0)=(0,v)$ and $\gamma((0,\varepsilon))\subset \widetilde{X}\cap ((0,+\infty)\times \C^n)$. We can parametrize $\gamma$ such that $\gamma(t)=(t,\beta(t))$, where $\beta$ satisfies $\lim\limits_{t\to 0^+}\beta(t)=v$. Therefore, $\frac{\beta(t)}{t}\in X$ for all $t\in (0,\varepsilon)$. Let $\{t_j\}_j\subset (0,\varepsilon)$ be a sequence which converges to $0$. Then, the sequence $\{z_j\}_j\subset X$ given by $z_j=\frac{\beta(t_j)}{t_j}$ satisfies $\lim\frac{z_j}{\|z_j\|}=v$, which shows that $v\in C_{\infty}(X)$.
Therefore, $C^*(X)\subset C_{\infty}(X)$, which finishes the proof of Claim \ref{algebraic_cone}.

\end{proof}
Since $C^*(X)$ is a homogeneous complex algebraic set, we obtain that $C_{\infty}(X)$ is a homogeneous complex algebraic set as well.
Since $\dim_{\C} Z\geq \dim_{\C} \overline{X} + \dim_{\C}H_{\infty}-n\geq d+n-1-n=d-1$, then $\dim_{\C}C_{\infty}(X)\geq d$. By Corollary  2.18 in \cite{FernandesS:2020}, $\dim_{\R}C_{\infty}(X)\leq \dim_{\R}X=2d$. Therefore $C_{\infty}(X)$ is a $d$-dimensional complex algebraic set.

\bigskip

\noindent $(2) \Rightarrow (1)$. Assume that $X$ has a unique tangent cone at infinity which is a $d$-dimensional complex algebraic set $Y=C_{\infty}(X)$. 
Let $\{e_1,...,e_n\}$ be the canonical basis of $\C^n$. For $\lambda\in \Lambda:=\{\lambda\colon \{1,...,k\}\to \{1,...,n\}; \lambda(1)<...<\lambda(k)\}$, we write $\C_{\lambda}=span\{e_{\lambda(1)},...,e_{\lambda(k)}\}$ and $\pi_{\lambda}\colon \C^n\to \C_{\lambda}$ is the orthogonal projection.

By making a linear change of coordinates, if necessary, we may assume that $\pi_{\lambda}^{-1}(0)\cap Y=\{0\}$ for all $\lambda\in \Lambda$ (see, for example, \cite[Chapter 17, p. 226, Corollary]{Chirka:1989}). In particular, $\pi_{\lambda}|_X\colon X\to \C_{\lambda}$ is a proper mapping. Thus, there exists a complex analytic set $\sigma\subset \C_{\lambda}$ such that $\dim \sigma <d$ and $f_{\lambda}:=\pi|_{X\setminus \pi^{-1}(\sigma)}\colon X\setminus \pi_{\lambda}^{-1}(\sigma)\to \C_{\lambda}\setminus \sigma$ is a covering mapping with $k_{\lambda}$ fibers, for some $k_{\lambda}\in \mathbb{N}$. Since $f_{\lambda}$ is a Lipschitz mapping, by coarea formula (see \cite[Lemma 5]{Tworzewski:1981}), we obtain
\begin{eqnarray*}
\mathcal{H}^{2d}(X\cap B_r(0))&=&\sum\limits_{\lambda\in\Lambda(n,d)}\int_{\C_{\lambda}}\mathcal{H}^{0}(\pi_{\lambda}^{-1}(y)\cap X\cap B_r(0))d\mathcal{H}^{2d}(y)\\
       &\leq& \sum\limits_{\lambda\in\Lambda(n,d)}k_{\lambda}\mathcal{H}^{2d}(\C_{\lambda}\cap B_r(0))\\
       &\leq& k\binom{n}{d}\mu_{2d}r^{2d},      
\end{eqnarray*}
for all $r>0$, where $\mu_{2d}$ is the volume of the $2d$-dimensional unit ball and $k=\max \{k_{\lambda};\lambda \in\Lambda(n,d)\}$. By Stoll-Bishop's Theorem (Theorem \ref{Stoll-Bishop}), $X$ is a complex algebraic set.

\bigskip

\noindent $(1) \Rightarrow (3)$.
Since the identity mapping is a bi-Lipschitz mapping, we obtain that (1) implies (3). 

\bigskip

\noindent $(3) \Rightarrow (1)$. We assume that $X$ is bi-Lipschitz homeomorphic at infinity to a complex algebraic set. Let $A\subset \C^m$ be a complex algebraic set such that there exists compact subsets $K\subset \C^n$ and $\tilde K\subset \C^m$ and a bi-Lipschitz homeomorphism $\varphi\colon A\setminus \tilde K\to X\setminus  K$. Let $\lambda\geq 1$ such that
$$
\frac{1}{\lambda}\|x-y\|\leq \|\varphi(x)-\varphi(y)\|\leq \lambda \|x-y\|, \quad  \forall x,y\in A\setminus \tilde K.
$$

Fix $x_0\in A\setminus \tilde K$ and set $y_0=\varphi(x_0)$. Let $\tilde r_0=\|x_0\|$ and $r_0=\|y_0\|$. Thus, for any $y\in X\setminus K$ with $\|y\|= r$, let $x\in A\setminus \tilde K$ such that $y=\varphi(x)$, then we have
\begin{eqnarray*}
\|x\|&\leq& \|\varphi^{-1}(y)-\varphi^{-1}(y_0)\|+\|\varphi^{-1}(y_0)\|\\
              &\leq&\lambda\|y-y_0\|+ \|x_0\|\\
              &\leq&\lambda(r+r_0)+r_0              
\end{eqnarray*}
Thus, for any $r>0$, we obtain that 
$$
(X\setminus K)\cap B_{r}^{2n}(0)\subset \varphi ((A\setminus \tilde K)\cap B_{\lambda(r+r_0)+r_0}^{2m}(0)).
$$

Therefore, for any $r>0$, we have
\begin{eqnarray*}
\mathcal{H}^{2d}(X\cap B_{r}^{2n}(0))&= &\mathcal{H}^{2d}((X\setminus K)\cap B_{r}^{2n}(0))+\mathcal{H}^{2d}(X\cap K\cap B_{r}^{2n}(0))\\
    &\leq &\mathcal{H}^{2d}(\varphi ((A\setminus \tilde K)\cap B_{\lambda(r+r_0)+r_0}^{2m}(0)))+ \mathcal{H}^{2d}(X\cap K)\\
    &\leq &\lambda^{2d}\mathcal{H}^{2d}((A\setminus \tilde K)\cap B_{\lambda(r+r_0)+r_0}^{2m}(0))+ \mathcal{H}^{2d}(X\cap K)\\
    &\leq &\lambda^{2d}\mathcal{H}^{2d}(A\cap B_{\lambda(r+r_0)+r_0}^{2m}(0))+ \mathcal{H}^{2d}(X\cap K).
\end{eqnarray*}

Since $A$ is a complex algebraic set, by Stoll-Bishop's Theorem, there exists a constant $C>0$ such that $\mathcal{H}^{2d}(A\cap B_{t}^{2m}(0))\leq C t^{2d}$, for all $t>0$. Since $\mathcal{H}^{2d}(X\cap K)<+\infty$, there exists $R_1>0$ such that
$$
\frac{\mathcal{H}^{2d}(X\cap B_{r}^{2n}(0))}{r^{2d}}\leq \frac{\lambda^{2d}C(\lambda(r+r_0)+r_0)^{2d}+\mathcal{H}^{2d}(X\cap K)}{r^{2d}}\leq R_1,
$$
for all $r\geq 1$. But $X$ is analytic at $0$, then 
$$
\lim\limits_{r\to 0^+}\frac{\mathcal{H}^{2d}(X\cap B_{r}^{2n}(0))}{r^{2d}}=\mu_{2d}m(X,0),
$$
where $m(X,0)$ denotes the multiplicity of $X$ at $0$ (see \cite[Theorem 7.3]{Draper:1969}). Thus, there exists $R_2>0$ such that 
$$
\frac{\mathcal{H}^{2d}(X\cap B_{r}^{2n}(0))}{r^{2d}} \leq R_2,
$$
for all $r\leq 1$.

Therefore, by Stoll-Bishop's Theorem, $X$ is a complex algebraic set.
%
%
%
\end{proof}

\begin{remark}
After this paper was finished, we learned that just some days before, it was proved in \cite{DiasR:2021} that the items (1) and (2) of Theorem \ref{chow-type-thm} are equivalent. However, we notice that our proof is independent and this is why we keep it here.
\end{remark}

\begin{corollary}
Let $X\subset\C^n$ and $Y\subset \C^m$ be two entire complex analytic subsets. Assume that $X$ and $Y$ are bi-Lipschitz homeomorphic at infinity. Then $X$ is a complex algebraic subset if and only if $Y$ is a complex algebraic subset as well.
\end{corollary}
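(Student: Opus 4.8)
The plan is to deduce the corollary from Theorem~\ref{chow-type-thm}. Since the relation ``bi-Lipschitz homeomorphic at infinity'' is symmetric, it suffices to prove a single implication: assuming that $X$ is a complex algebraic set, I would show that $Y$ is algebraic as well, the converse following by interchanging $X$ and $Y$. If $X$ and $Y$ were both pure dimensional this would be immediate, since $Y$ would then be a pure dimensional entire complex analytic set bi-Lipschitz homeomorphic at infinity to the algebraic set $X$, and the implication $(3)\Rightarrow(1)$ of Theorem~\ref{chow-type-thm} would apply directly. The only genuine issue is therefore the absence of a purity hypothesis; note that a direct volume-growth argument along the lines of Stoll--Bishop would only control the top-dimensional part of $Y$, because $\mathcal{H}^{2d}$ vanishes on the lower-dimensional components, so some decomposition into pure parts seems unavoidable.

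Accordingly, I would first decompose $X=\bigcup_{d}X^{(d)}$ and $Y=\bigcup_{d}Y^{(d)}$ into pure dimensional parts, where $X^{(d)}$ denotes the union of the $d$-dimensional irreducible components of $X$, and similarly for $Y$. An entire analytic set is algebraic if and only if each of its pure dimensional parts is, so it is enough to show that, for each $d$, the set $X^{(d)}$ being algebraic forces $Y^{(d)}$ to be algebraic. The natural way to do this is to prove that the hypothesis descends to the pure parts, namely that $X^{(d)}$ and $Y^{(d)}$ are themselves bi-Lipschitz homeomorphic at infinity. Granting this, $X^{(d)}$ is algebraic (being a union of irreducible components of the algebraic set $X$) and bi-Lipschitz homeomorphic at infinity to the pure dimensional set $Y^{(d)}$, so $(3)\Rightarrow(1)$ of Theorem~\ref{chow-type-thm} yields that $Y^{(d)}$ is algebraic, and then $Y=\bigcup_{d}Y^{(d)}$ is a finite union of algebraic sets, hence algebraic.

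The hard part, which I expect to be the main obstacle, is precisely to check that the decomposition is respected at infinity. Let $\phi\colon X\setminus K\to Y\setminus\widetilde K$ be the given bi-Lipschitz homeomorphism. As $\phi$ and $\phi^{-1}$ are homeomorphisms they preserve local topological dimension, and a point of a complex analytic set has local real dimension $2d$ exactly when its local complex dimension is $d$; hence $\phi$ restricts to a bijection between $\{z\in X\setminus K:\dim_{z}X=d\}$ and $\{w\in Y\setminus\widetilde K:\dim_{w}Y=d\}$. The closure of the first locus is $X^{(d)}$, because the points of $X^{(d)}$ lying on a component of higher dimension form a nowhere dense analytic subset, and likewise for $Y$. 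Since a bi-Lipschitz map is uniformly continuous, $\phi$ and $\phi^{-1}$ extend to the respective closures with the same Lipschitz constants, and the extension remains bi-Lipschitz and carries one closure onto the other; after enlarging $K$ and $\widetilde K$ to closed balls this produces the desired bi-Lipschitz homeomorphism at infinity between $X^{(d)}$ and $Y^{(d)}$. The delicate points to verify carefully are that passing to closures spoils neither bijectivity nor the Lipschitz bounds, together with the bookkeeping near the intersections of components of different dimensions.
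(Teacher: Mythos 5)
The paper offers no proof of this corollary: it is presented as an immediate consequence of the equivalence $(1)\Leftrightarrow(3)$ in Theorem~\ref{chow-type-thm}, which is exactly the route you take. What you add, correctly, is the observation that the corollary drops the purity hypothesis under which the theorem is stated, so a reduction to pure dimensional parts is genuinely needed, and your mechanism for it is sound: local topological dimension is a homeomorphism invariant, it equals twice the local complex dimension on a complex analytic set, so $\phi$ matches up the loci $\{\dim_z X=d\}$ and $\{\dim_w Y=d\}$, whose closures are $X^{(d)}$ and $Y^{(d)}$ because a $d$-dimensional irreducible component meets the union of the other components in a nowhere dense analytic subset. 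The remaining verifications you flag are indeed routine: the uniformly continuous extension to the closures stays bijective and bi-Lipschitz with the same constants (the lower bound gives injectivity, and preimages of convergent sequences are Cauchy, giving surjectivity onto the closure of the image), and the compact sets can be adjusted so that the restriction is an exact bijection $X^{(d)}\setminus K'\to Y^{(d)}\setminus\widetilde K'$. Two small points worth recording explicitly: every positive-dimensional irreducible component of an entire analytic subset of $\C^n$ is unbounded (a compact one would be finite), so no component of $X^{(d)}$ or $Y^{(d)}$ with $d\geq 1$ is swallowed by the excised compact sets; and the $0$-dimensional parts are handled directly, since a discrete closed subset bi-Lipschitz equivalent at infinity to a finite set is finite, hence algebraic. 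With these remarks your argument is complete and, in the non-pure case, actually more careful than what the paper records.
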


\begin{corollary}
Let $A\subset\C P^n$ be a subset. Assume that there exists a hyperplane $H\subset\C P^n$ such that $A\setminus H$ is a pure $d$-dimensional complex analytic subset of $\C P^n\setminus H$. Then the following statements are equivalent: 
\begin{itemize}
 \item [(1)] $A$ is a $d$-dimensional complex algebraic subset of $\C P^n$;
 \item [(2)] $A\cap H$ is a $(d-1)$-dimensional complex algebraic subset of $\C P^n$;
 \item [(3)] $A\cap H$ is a $(d-1)$-dimensional complex analytic subset of $H$;
\end{itemize}
\end{corollary}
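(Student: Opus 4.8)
The plan is to pass to the affine chart $\C P^n\setminus H\cong\C^n$ and reduce everything to the entire analytic set $X:=A\setminus H\subset\C^n$, exploiting its projective closure $\overline{X}$ in $\C P^n$ together with Chow's Theorem and the Remmert--Stein removable singularity theorem; this is the same projective-closure toolkit used in the proof of Theorem \ref{chow-type-thm}. Since $X$ is closed in the affine chart, one has $\overline{X}\setminus H=X$, so that $\overline{X}\cap H$ is exactly the trace of $X$ at infinity. I will work under the natural reading that $A$ equals the closure of its pure-dimensional part, i.e. $A=\overline{A\setminus H}=\overline{X}$, equivalently that $A$ has no irreducible component contained in $H$; in particular $A\cap H=\overline{X}\cap H$. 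This identification is precisely what links the three conditions, and I would organize the argument as the cycle $(1)\Rightarrow(2)\Rightarrow(3)\Rightarrow(1)$.

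For $(1)\Rightarrow(2)$, assuming $A$ is a $d$-dimensional complex algebraic subset, the intersection $A\cap H$ with the hyperplane is again algebraic. Since no component of $A=\overline{X}$ lies in $H$ (each component meets the dense subset $X$), the projective dimension theorem forces every $d$-dimensional component of $A$ to meet $H$ in dimension at least $d-1$, while properness of the intersection gives dimension at most $d-1$; hence $A\cap H$ is a $(d-1)$-dimensional complex algebraic subset of $\C P^n$. The implication $(2)\Rightarrow(3)$ is immediate: a complex algebraic subset of $\C P^n$ contained in the linear subspace $H\cong\C P^{n-1}$ is in particular a complex analytic subset of $H$ of the same dimension.

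The crux is $(3)\Rightarrow(1)$. First, by Chow's Theorem applied to $H\cong\C P^{n-1}$, the $(d-1)$-dimensional analytic set $V:=A\cap H=\overline{X}\cap H$ is a complex algebraic subset of $H$, hence of $\C P^n$, with $\dim_{\C}V=d-1<d$. Now $\overline{X}\setminus V=\overline{X}\setminus H=X$ is a pure $d$-dimensional analytic subset of $\C P^n\setminus V$: on $\C P^n\setminus H$ it coincides with $X$, and it has no further points on $H\setminus V$ precisely because $\overline{X}\cap H=V$. By the Remmert--Stein theorem (see \cite{Chirka:1989}), removing the lower-dimensional analytic set $V$ is harmless, so the closure $\overline{X}$ is a pure $d$-dimensional complex analytic subset of $\C P^n$. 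Finally Chow's Theorem yields that $\overline{X}=A$ is a $d$-dimensional complex algebraic subset, which is $(1)$.

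The main obstacle is exactly this removable-singularity step in $(3)\Rightarrow(1)$: one must guarantee both that the removed set is genuinely of complex dimension $<d$, so that Remmert--Stein applies and no spurious $d$-dimensional piece can hide inside $H$, and that $\overline{X}$ has no limit points on $H\setminus V$. This is where the hypothesis $A=\overline{A\setminus H}$ (equivalently $\overline{X}\cap H=A\cap H$) is essential; without it one could append an arbitrary analytic component inside $H$ and break the equivalence, as the union of a line with a curve lying in $H$ already shows. A secondary technical point, needed in $(1)\Rightarrow(2)$, is that the hyperplane section drops the dimension of each top-dimensional component by exactly one, for which the projective dimension theorem — unavailable in the affine setting — is the correct tool.
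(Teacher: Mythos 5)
The paper states this corollary without any proof, so there is no official argument to measure yours against; judged on its own, your proof is essentially correct and complete. The route you take for the key implication $(3)\Rightarrow(1)$ --- apply Chow's theorem inside $H\cong\C P^{n-1}$ to make $V=A\cap H$ algebraic, use Remmert--Stein to conclude that $\overline{A\setminus H}$ is a pure $d$-dimensional analytic subset of $\C P^n$, then apply Chow again in $\C P^n$ --- is the classical algebraicity criterion (cf.\ \cite{Chirka:1989}, Section 7.2) and is very likely what the author intended by placing the statement after Theorem \ref{chow-type-thm}; an alternative more in the spirit of that theorem would be to show that $(3)$ forces the Stoll--Bishop volume bound, but that is no shorter. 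Your $(1)\Rightarrow(2)$ via the projective dimension theorem and the trivial $(2)\Rightarrow(3)$ are fine.

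One point deserves emphasis: you are right that the statement is false as literally written unless one assumes $A=\overline{A\setminus H}$, i.e.\ that $A$ is closed with no irreducible component contained in $H$. Indeed, $A=L\cup D$ with $L$ a projective line not contained in $H$ and $D$ an algebraic curve inside $H$ satisfies the standing hypothesis with $d=1$ and satisfies $(1)$ but not $(2)$ or $(3)$. Since the paper gives no proof, it offers no hint of the intended reading; your explicit insertion of this hypothesis is the correct repair, and it is exactly what guarantees $\overline{A\setminus H}\cap H=V$, hence the closedness of $A\setminus H$ in $\C P^n\setminus V$ that Remmert--Stein requires. The only remaining micro-points are the degenerate case $d=0$ (where $(2)$ and $(3)$ must be read as $A\cap H=\emptyset$) and the remark that $V$, being analytic in the submanifold $H$, is analytic in $\C P^n$ of dimension $d-1<d$, so Remmert--Stein indeed applies; both are immediate.
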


\section{Some results like Moser's Bernstein Theorem}\label{main}
\subsection{Lipschitz regularity at infinity: Parametric version}

\begin{theorem}\label{main theorem}
Let $Z\subset\C^n$ be a pure $d$-dimensional entire complex analytic subset. If $Z$ is Lipschitz regular at infinity, then $Z$ is an affine linear subspace of $\C^n$.
\end{theorem}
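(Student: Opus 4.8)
The plan is to prove the statement by a short reduction: first I would upgrade the hypothesis ``entire analytic and Lipschitz regular at infinity'' to ``algebraic'' using the Chow-type theorem (Theorem~\ref{chow-type-thm}), and then apply the already-known Bernstein-type theorem for algebraic sets. I would begin with the dimension bookkeeping. By Definition~\ref{def:Lip_regular} there is some $k\in\N$ such that $Z$ and $\R^k$ are bi-Lipschitz homeomorphic at infinity. Since a bi-Lipschitz homeomorphism preserves Hausdorff dimension and $Z$ is pure $d$-dimensional over $\C$, necessarily $k=\dim_{\R}Z=2d$. Now the standard real-linear identification $\R^{2d}\cong\C^d$ is an isometry, in particular bi-Lipschitz, and, regarding $\C^d$ as the linear subspace $\{(z_1,\dots,z_d,0,\dots,0)\}\subset\C^n$, it is a complex algebraic set. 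As being bi-Lipschitz homeomorphic at infinity is an equivalence relation, transitivity gives that $Z$ is bi-Lipschitz homeomorphic at infinity to the complex algebraic set $\C^d$.

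At this point I would invoke Theorem~\ref{chow-type-thm}: its implication $(3)\Rightarrow(1)$ applies verbatim, since the hypotheses on $Z$ being a pure $d$-dimensional entire complex analytic subset are exactly those of that theorem, and it yields that $Z$ is a complex algebraic set. Finally, $Z$ is now a complex algebraic set which is Lipschitz regular at infinity, so the conclusion is precisely the content of the Bernstein-type theorem for algebraic sets proved in \cite{FernandesS:2020}, giving that $Z$ is an affine linear subspace of $\C^n$.

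The whole weight of the argument is therefore borne by Theorem~\ref{chow-type-thm} together with the algebraic case from \cite{FernandesS:2020}, so there is no genuinely hard step internal to this proof; the only points requiring care are the dimension identification $k=2d$ and the transitivity that legitimizes passing to $\C^d$. If one preferred a self-contained argument avoiding \cite{FernandesS:2020}, the remaining difficulty would be to show directly that a $d$-dimensional complex algebraic cone which is bi-Lipschitz to $\C^d$ at infinity is a linear subspace: the natural route is to pass to the tangent cone at infinity $C_{\infty}(Z)$, which is unique and algebraic by Theorem~\ref{chow-type-thm} item $(2)$, observe that Lipschitz regularity at infinity is inherited by it, and then invoke a rigidity property of complex algebraic cones to force linearity. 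This last rigidity step is where the real work would lie, which is exactly why it is convenient to outsource it to the algebraic Bernstein theorem already available in the literature.
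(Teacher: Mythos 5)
Your proposal is correct, but it takes a genuinely different route from the paper after the first step. Both arguments begin the same way: you (correctly, via invariance of Hausdorff dimension under bi-Lipschitz maps) identify $k=2d$, compose with the isometry $\R^{2d}\cong\C^d$ to see that $Z$ is bi-Lipschitz homeomorphic at infinity to the complex algebraic set $\C^d$, and apply the implication $(3)\Rightarrow(1)$ of Theorem~\ref{chow-type-thm} to conclude that $Z$ is algebraic; the paper does essentially this too (applying Theorem~\ref{chow-type-thm} to $X=Z\times\{0\}$, and also extracting from it the uniqueness of the tangent cone at infinity). Where you diverge is in the second half: you outsource the algebraic case to Theorem 3.8 of \cite{FernandesS:2020}, which is logically legitimate and non-circular since that result has its own independent published proof. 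The paper instead gives a self-contained argument: it extends the bi-Lipschitz map to the ambient space, takes a blow-down limit to show that the tangent cone at infinity is a topological manifold and hence, by Prill's theorem, a linear subspace $E$; it then shows the orthogonal projection $P|_X\colon X\to E$ is a ramified cover of degree one via an inner-distance estimate (two distinct liftings of a ray would be joined only by paths of length $\gtrsim 2\eta t$, contradicting the linear bound on $d_X$ coming from Lipschitz regularity), and concludes by extending the inverse holomorphically and applying Liouville's theorem. Your route is shorter; the paper's buys a new, independent proof of the algebraic case, which is precisely why it can then state Theorem 3.8 of \cite{FernandesS:2020} as a corollary --- a presentation that your reduction would render vacuous, though not incorrect. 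The only points you should make explicit are the (easy) unboundedness of $Z$ when $d\geq 1$, needed for the dimension count, and the degenerate case $d=0$, which neither you nor the paper addresses.
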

Before starting the proof of Theorem \ref{main theorem}, we do a slight digression to remind the notion of inner distance on a connected Euclidean subset.

Let $E\subset\R^{m}$ be a path-connected subset. Given two points $p,q\in E$, we define the {\bf inner distance} in $E$ between $p$ and $q$ by the number $d_E(p,q)$ below:
$$d_E(p,q):=\inf\{ \mbox{length}(\gamma) \ | \ \gamma \ \mbox{is an arc on} \ E \ \mbox{connecting} \ p \ \mbox{to} \ q\}.$$
\begin{proof}[Proof of Theorem \ref{main theorem}]

Let $K_1\subset\C^n$ and $ K_2\subset \C^d$ be compact subsets such that there exists a bi-Lipschitz homeomorphism $\phi\colon Z\setminus  K_1\to \C^d\setminus K_2$. 
Without loss of generality, one can suppose that $K_2$ is the Euclidean closed ball $\overline {B_r(0)}$, for some $r>0$. 
By taking $\C^N=\C^n\times\C^d$ and $X=Z\times \{0\}\subset \C^N$, there exists a bi-Lipschitz map $\varphi\colon \C^N\to\C^N$ such that $\varphi(A)=B$, where $A=X\setminus  (K_1\times \{0\})$ and $B=\{0\}\times (\C^d\setminus   K_2)$ (see Lemma 3.1 in \cite{Sampaio:2016}). Let $\lambda>0$ be a constant such that
\begin{equation*}
\frac{1}{\lambda}\|x-y\|\leq \|\varphi(x)-\varphi(y)\|\leq \lambda\|x-y\|, \quad \forall x,y\in \C^N.
\end{equation*}

For each $k\in\N$, we define the mappings $\varphi_k\colon \C^N\to \C^N$ given by $\varphi_k(v)=\frac{1}{k}\varphi(kv)$. Thus, each $\varphi_k$ satisfies the following
\begin{equation*}
\frac{1}{\lambda}\|x-y\|\leq \|\varphi_k(x)-\varphi_k(y)\|\leq \lambda\|x-y\|, \quad \forall x,y\in \C^N.
\end{equation*}
So, there exist a subsequence $T=\{t_{j}\}_{j\in \N}\subset \N$ and a bi-Lipschitz mapping $d\varphi\colon\C^N\to \C^N$ such that $\varphi_{t_{j}}\to d\varphi$ uniformly on compact subsets of $\C^N$ (see more details in \cite[Theorem 2.19]{FernandesS:2020}). Furthermore, 
$$
\frac{1}{\lambda}\|u-v\|\leq \|d\varphi(u)-d\varphi(v)\|\leq \lambda\|u-v\|, \quad \forall u,v\in \C^N
$$
and $d\varphi(C_{\infty}^T(A))=C_{\infty}^T(B)=\{0\}\times\C^d$. In particular, $C_{\infty}^T(A)$ is a topological manifold. By Theorem \ref{chow-type-thm}, $X$ is a complex algebraic set and has a unique tangent cone at infinity, which is a homogeneous complex algebraic set. Thus, $A$ has a unique tangent cone at infinity and $C_{\infty}^T(A)=C_{\infty}(A)=C_{\infty}(X)$. By Prill's Theorem (see \cite[Theorem]{Prill:1967}), which says that any complex cone (i.e. a union of one-dimensional linear subspaces of $\C^N$) that is a topological manifold must be a linear subspace of $C^N$, we obtain that $C_{\infty}(X)$ is a linear subspace of $\C^N$.

Let us choose linear coordinates $(x,y)$ in $\C^N$ such that $$E:=C_{\infty }(X)=\{(x,y)\in\C^N;\,y=0\}$$ 
and let $P\colon \C^N\rightarrow E$ be the orthogonal projection. 
%
Since the linear subspace $E$ is the tangent cone of $X$  at infinity, we have that $P|_{ X}\colon X\rightarrow E$ is a proper mapping. In fact, it is easy to show that there exist a constant $R>0$ and a compact set $K\subset \C^N$ such that $\|y\|\leq C\|P(x,y)\|= C\|x\|$ for all $(x,y)\in X\setminus K$.  
Then $P|_{ X}\colon X\rightarrow C_{\infty }(X)$ is a ramified cover with degree $<+\infty$ (see \cite{Chirka:1989}, Corollary 1 in the page 126). This means that there exists a codimension $\geq 1$ complex algebraic subset $\Sigma$ of the linear space $E$, called the ramification locus of $P|_X$, such that $\pi:=P|_{X\setminus P^{-1}(\Sigma)}\colon X\setminus P^{-1}(\Sigma)\to E\setminus \Sigma$ is a cover mapping with degree ${\rm deg} (\pi)<+\infty$.

\begin{claim}\label{claim:degree}
${\rm deg} (\pi)=1$.
\end{claim}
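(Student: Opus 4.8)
The plan is to show that the covering $\pi:=P|_{X\setminus P^{-1}(\Sigma)}$ has degree $1$ by exploiting the fact that $X$ is bi-Lipschitz homeomorphic at infinity to the linear space $\C^d$, which is \emph{connected at infinity} in a strong quantitative sense, whereas a covering of degree $>1$ would force $X$ to have several sheets that are far apart in the ambient metric yet whose inner distance is comparable, contradicting the bi-Lipschitz equivalence with $\C^d$. More concretely, since $\varphi$ carries $A=X\setminus(K_1\times\{0\})$ bi-Lipschitz onto $B=\{0\}\times(\C^d\setminus K_2)$, and since $\C^d$ minus a ball has inner distance comparable to Euclidean distance, the set $X$ (away from a compact set) must itself have inner distance comparable to the ambient Euclidean distance, up to the bi-Lipschitz constant $\lambda$.

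First I would record the LNE-type (Lipschitz normally embedded) consequence of the hypothesis: there is a compact set and a constant $\lambda'>0$ so that $d_X(p,q)\le \lambda'\|p-q\|$ for all $p,q\in X$ outside a compact set. This follows by transporting arcs: given $p,q\in X$ far from the origin, join $\varphi(p),\varphi(q)\in B\cong\C^d\setminus K_2$ by a nearly straight arc of length $\approx\|\varphi(p)-\varphi(q)\|\le\lambda\|p-q\|$, pull it back through the bi-Lipschitz $\varphi$, and use that lengths distort by at most $\lambda$. Thus $X$ is Lipschitz normally embedded at infinity. Next, suppose for contradiction that ${\rm deg}(\pi)=m\ge 2$. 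Over a point $x_0\in E\setminus\Sigma$ with $\|x_0\|$ large, the fiber $\pi^{-1}(x_0)$ consists of $m$ distinct points $p_1,\dots,p_m\in X$ all projecting to $x_0$ under the orthogonal projection $P$. Since $E=C_\infty(X)$ is the tangent cone, the $y$-coordinates of these points grow sublinearly relative to $\|x_0\|$, so all $m$ sheets are asymptotically confined near $E$; I would then produce two sheets, say $p_1,p_2$, whose Euclidean distance is small relative to $\|x_0\|$ but whose inner distance within $X$ is forced to be large, because any arc in $X$ joining them must either leave the region near $x_0$ or pass through the ramification locus, both of which cost length bounded below by a fixed fraction of $\|x_0\|$.

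The main obstacle, and the step I expect to require the most care, is establishing that two distinct sheets over the same base point cannot be joined by a short arc inside $X$ — i.e.\ quantifying the inner-distance lower bound that contradicts normal embedding. The clean way is to transport the whole picture by $\varphi$ to $\{0\}\times(\C^d\setminus K_2)$: the image $\varphi(X\setminus K)$ is bi-Lipschitz to $\C^d\setminus K_2$, and the latter has the property that its inner distance equals (up to constants) its Euclidean distance and that it is homeomorphic to $\C^d$ minus a ball, which for $d\ge 2$ has trivial behavior and for $d=1$ is handled separately. Since a degree-$m$ cover with $m\ge 2$ would make $X$ (hence $\varphi(X\setminus K)$) have more topological or metric "sheets at infinity" than $\C^d$, this is impossible. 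Concretely I would argue that the covering $\pi$ being nontrivial over $E\setminus\Sigma$ produces nontrivial monodromy, so going around a large loop in $E\setminus\Sigma$ permutes the sheets; following this loop in $X$ yields a path whose endpoints are a bounded Euclidean distance apart (both projecting near the same base point) but whose inner length is bounded below by the circumference of the loop, which is proportional to $\|x_0\|$. This violates $d_X(p,q)\le\lambda'\|p-q\|$ once $\|x_0\|$ is large enough, forcing $m=1$ and completing the proof of Claim \ref{claim:degree}.
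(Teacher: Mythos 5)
Your proposal is correct and follows essentially the same route as the paper: an upper bound on the inner distance of $X$ at infinity coming from the bi-Lipschitz equivalence with $\C^d\setminus K_2$ (where inner and Euclidean distances are comparable), played against a linear-in-$\|x_0\|$ lower bound on the inner distance between two points of a fiber of $\pi$, which are themselves only $o(\|x_0\|)$ apart in the ambient metric because both project to the same point of the tangent cone at infinity. The one detail to nail down --- which the paper handles by picking the base direction $v_0\in C_{\infty}(X)\setminus C_{\infty}(\Sigma)$ so that a conical, simply connected, $\Sigma$-free neighborhood of the ray $tv_0$ of width $\eta t$ exists --- is that your lower bound comes from the path having to exit such a region (not from ``passing through the ramification locus,'' which by itself costs nothing), so the base point must be chosen in a direction not tangent to $\Sigma$ at infinity rather than merely in $E\setminus\Sigma$ with large norm.
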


Let us assume, for a moment, that Claim \ref{claim:degree} holds true. Then $P|_{X\setminus P^{-1}(\Sigma)}\colon X\setminus P^{-1}(\Sigma)\to E\setminus \Sigma$ is a bijective mapping. Let $F\colon E\setminus \Sigma \to X\setminus P^{-1}(\Sigma)$ be its inverse. Since $\dim \Sigma\leq d-1$, $F$ extends to a holomorphic mapping $\tilde F\colon E \to X$. Since $\|y\|\leq C\|P(x,y)\|= C\|x\|$ for all $(x,y)\in X\setminus K$ and $\tilde F$ is bounded in the compact set $P(K)$, then there exists a constant $\tilde R$ such that $\|\tilde F(x,0)\|\leq \tilde R(\|x\|+1)$ for all $(x,0)\in E$. By Liouville's Theorem, $\tilde F$ is an affine mapping. Since $X$ is a pure dimensional set, we obtain that $X$ is an affine linear subspace of $\C^N$, which implies that $Z$ is an affine linear subspace of $\C^n$.

Thus, in order to finish the proof, we only need to prove Claim \ref{claim:degree}.
\begin{proof}[Proof of Claim \ref{claim:degree}]
Since $\Sigma$ is a codimension $\geq 1$ complex algebraic subset $\Sigma$ of the linear space $E$, then there exists a unit tangent vector $v_0\in C_{\infty }(X)\setminus C_{\infty }(\Sigma)$.
Since $v_0$ is not tangent to $\Sigma$ at infinity, there exist positive real numbers $\eta$ and $R$ such that
$$C_{\eta,R}=\{v\in C_{\infty }(X);\ \|v-tv_0\|< \eta t, \mbox{ for some } t>R \}$$
does not intersect the set $\Sigma$. 

Let us suppose that the degree ${\rm deg} (X)$ is strictly greater than 1. Thus, we have at least two different liftings $\gamma_1(t)$ and $\gamma_2(t)$ of the half-line $r(t)=tv_0$,  i.e. $P(\gamma_1(t))=P(\gamma_2(t))=tv_0$.  

On the other hand, since $C_{\eta,R }$ is a simply connected set, any path in $X$ connecting $\gamma_1(t)$ to $\gamma_2(t)$ is the lifting of a loop, based at the point $tv_0$ which is not contained in $C_{\eta,R }$. Thus, the length of such a path must be at least  $2\eta t$. It implies that the inner distance satisfies $d_{A}(\gamma_1(t),\gamma_2(t))\geq 2\eta t$.

Since
$$ \frac{1}{\lambda}\|p-q\|\leq \|\psi(p)-\psi(q)\|\leq \lambda \|p-q\|, \, \forall p,q\in B,$$
where $\psi=\varphi^{-1}$, it follows that
$$\frac{1}{\lambda}d_B(p,q)\leq d_A(\psi(p),\psi(q))\leq \lambda d_B(p,q), \, \forall p,q\in B.$$
On the other hand,  $d_B(p,q)\leq \pi\|p-q\|$, for all $p,q\in B$ and, by those inequalities above, it follows that
$$d_A(\gamma_1(t),\gamma_2(t))\leq \lambda^2\pi\|\gamma_1(t)-\gamma_2(t)\|, \, \forall \ t\in (r,+\infty).$$

But, $P(\gamma_1(t))=P(\gamma_2(t))=tv_0$ and $P$ is the orthogonal projection on $C_{\infty }(X)$, then $\gamma_1(t)$ and $\gamma_2(t)$ are tangent at infinity, that is,
$$\frac{\|\gamma_1(t)-\gamma_2(t)\|}{t}\to 0 \ \mbox{ as } \ t\to +\infty, $$
hence:
\begin{eqnarray*}
2\eta &\leq & \frac{d_{X}(\gamma_1(t),\gamma_2(t))}{t} \\
&\leq &\lambda^2\pi \frac{\|\gamma_1(t)-\gamma_2(t)\|}{t}\to 0 \ \ \mbox{as} \ t\to +\infty,
\end{eqnarray*}
which is a contradiction, since $\eta$ is a positive constant, which finishes the proof.
\end{proof}
\end{proof}

Notice that Theorem \ref{main theorem} does not hold true (with same assumptions) for real algebraic sets (cf. Example \ref{real_example} above).

As a consequence, we obtain one of the main results of \cite{FernandesS:2020}.
%

\begin{corollary}[Theorem 3.8 in \cite{FernandesS:2020}]
Let $X\subset\C^n$ be a pure dimensional entire complex algebraic subset. If $X$ is Lipschitz regular at infinity, then $X$ is an affine linear subspace of $\C^n$.
\end{corollary}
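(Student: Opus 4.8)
The plan is to deduce this corollary directly from Theorem \ref{main theorem}, since its hypotheses are merely a special case of those of the theorem. The only thing to observe is that every complex algebraic subset of $\C^n$ is automatically an entire complex analytic subset: by definition a complex algebraic set is the common zero locus of a finite family of polynomials $p_1,\dots,p_s\colon\C^n\to\C$, and each such polynomial is an entire holomorphic function on $\C^n$. Hence the defining family $\mathcal{F}=\{p_1,\dots,p_s\}$ of entire functions witnesses that $X$ is an entire complex analytic set in exactly the sense used throughout the paper, and so the notion of algebraicity is strictly stronger than the analytic hypothesis required by Theorem \ref{main theorem}.

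Next I would reconcile the dimension hypotheses. Theorem \ref{main theorem} is stated for a pure $d$-dimensional set, while the corollary only asks that $X$ be pure dimensional; so I would set $d=\dim_{\C}X$ and observe that pure dimensionality of $X$ means precisely that $X$ is pure $d$-dimensional for this value of $d$. With this bookkeeping done, $X$ is a pure $d$-dimensional entire complex analytic subset of $\C^n$ which is, by hypothesis, Lipschitz regular at infinity. Applying Theorem \ref{main theorem} verbatim then yields that $X$ is an affine linear subspace of $\C^n$, which is the desired conclusion.

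The ``main obstacle'' here is essentially absent: all of the substantive geometry—the tangent-cone analysis, the appeal to Prill's Theorem to force the cone to be linear, and the degree-one argument via inner distance—has already been carried out in the proof of Theorem \ref{main theorem}. The corollary is a pure specialization obtained by replacing the analytic hypothesis with the stronger algebraic one, and the only point requiring (minimal) care is the trivial identification of an algebraic set as an entire analytic set together with the matching of ``pure dimensional'' with ``pure $d$-dimensional''. No new argument is needed.
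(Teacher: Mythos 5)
Your proposal is correct and matches the paper exactly: the corollary is stated there as an immediate consequence of Theorem \ref{main theorem}, since a complex algebraic set is in particular an entire complex analytic set, and no further argument is given or needed.
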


Another consequence is the following Lipschitz version of Cancellation Law:
\begin{corollary}\label{weak-cancellation}
Let $X\subset\C^n$ be an entire complex analytic subset. If there exists $k\in \mathbb{N}$ such that $X\times \C^k$ is bi-Lipschitz homeomorphic to $\C^{d+k}$, then $X$ is a $d$-dimensional affine linear subspace of $\C^n$ and, in particular, $X$ is bi-Lipschitz homeomorphic to $\C^{d}$.
\end{corollary}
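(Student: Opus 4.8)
The plan is to apply Theorem \ref{main theorem} to the product $W:=X\times\C^k$, viewed as an entire complex analytic subset of $\C^{n+k}$, and then to descend the conclusion from $W$ down to $X$ by slicing. First I would record that $W$ is genuinely an entire complex analytic subset of $\C^{n+k}$: if $\mathcal{F}$ is a family of entire functions on $\C^n$ cutting out $X$, then the same functions, regarded as entire functions on $\C^{n+k}=\C^n\times\C^k$ that are constant in the last $k$ coordinates, cut out $W$.

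Next I would establish that $W$ is \emph{pure} $(d+k)$-dimensional and Lipschitz regular at infinity. A global bi-Lipschitz homeomorphism $h\colon W\to\C^{d+k}$ is in particular a homeomorphism onto $\R^{2(d+k)}$, so $W$ is a connected topological manifold of real dimension $2(d+k)$. Since the local real dimension of a complex analytic set at a point $p$ equals $2\dim_p W$, while the local topological dimension of a manifold is constant along connected components, we obtain $\dim_p W=d+k$ for every $p\in W$; that is, $W$ is pure $(d+k)$-dimensional (and in particular $\dim_\C X=d$ with $X$ itself pure $d$-dimensional). Moreover, taking the compact sets in the definition of bi-Lipschitz equivalence at infinity to be empty, the map $h$ exhibits $W$ and $\C^{d+k}\cong\R^{2(d+k)}$ as bi-Lipschitz homeomorphic at infinity, so $W$ is Lipschitz regular at infinity.

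With these two facts in hand, Theorem \ref{main theorem}, applied to $W$ (with $n$ and $d$ there replaced by $n+k$ and $d+k$), yields that $W$ is an affine linear subspace of $\C^{n+k}$. To descend to $X$, I would fix any $v\in\C^k$ and consider the slice $W\cap(\C^n\times\{v\})=X\times\{v\}$, which is the intersection of two affine subspaces of $\C^{n+k}$ and hence is itself an affine subspace; it is nonempty because $X\neq\emptyset$, so $X$ is an affine linear subspace of $\C^n$, necessarily of complex dimension $d$ by the dimension count above. Being a $d$-dimensional affine subspace, $X$ is isometric to $\C^d$, and in particular bi-Lipschitz homeomorphic to $\C^d$.

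The genuinely routine parts are the observation that the product is analytic and the slicing descent. The step requiring the most care is the passage from a mere homeomorphism to \emph{pure} $(d+k)$-dimensionality, since Theorem \ref{main theorem} is stated only for pure-dimensional sets: here one cannot appeal to any algebraic irreducibility of $X$ (which is not assumed), and must instead use the constancy of local dimension on the connected topological manifold $W$.
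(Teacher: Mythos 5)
Your proof is correct and follows the route the paper intends: the corollary is stated as a direct consequence of Theorem \ref{main theorem}, applied to $W=X\times\C^k$, which is globally (hence at infinity) bi-Lipschitz equivalent to $\C^{d+k}$. The details you supply -- that $W$ is an entire analytic set, that pure $(d+k)$-dimensionality follows from $W$ being a topological manifold of constant local dimension, and the slicing descent to $X$ -- are exactly the routine verifications the paper leaves implicit, and they are all sound.
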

%

We finish this Subsection with the following conjecture:

\begin{conjecture}
Let $X\subset\C^n$ be a pure $d$-dimensional entire complex analytic subset. Suppose that there exists a sequence $T=\{t_j\}_{j\in \N}$ of real positive numbers such that $t_j\to +\infty$ and $E=C_{\infty}^T(X)$ is a $d$-dimensional complex linear subspace of $\C^n$. If $X$ is LNE at infinity, then $X$ is an affine linear subspace of $\C^n$.
\end{conjecture}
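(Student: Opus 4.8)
The plan is to follow the architecture of the proof of Theorem \ref{main theorem}, splitting the argument into a \emph{reduction step}, in which one upgrades the single-sequence hypothesis to the assertion that $X$ is a complex algebraic set with unique tangent cone at infinity $C_\infty(X)=E$, and a \emph{rigidity step}, in which one reruns the degree computation of Claim \ref{claim:degree} using the LNE-at-infinity hypothesis (that is, $d_X(p,q)\le C\|p-q\|$ for all $p,q$ in $X$ outside a compact set) in place of the bi-Lipschitz transfer of the inner distance. I expect the rigidity step to be routine and the reduction step to be the genuine obstacle.

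For the rigidity step, suppose one has already shown that $X$ is pure $d$-dimensional algebraic with $C_\infty(X)=E$ linear. Choosing coordinates $(x,y)$ with $E=\{y=0\}$ and letting $P\colon\C^n\to E$ be the orthogonal projection, exactly as in the proof of Theorem \ref{main theorem} the map $P|_X$ is a proper ramified cover $X\to E$ with ramification locus $\Sigma$ of complex codimension $\ge 1$. I would pick a unit vector $v_0\in E\setminus C_\infty(\Sigma)$, build the simply connected cone $C_{\eta,R}$ around the ray $tv_0$ avoiding $\Sigma$, and, assuming $\deg(P|_X)\ge 2$, take two liftings $\gamma_1,\gamma_2$ of the ray. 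The loop argument gives $d_X(\gamma_1(t),\gamma_2(t))\ge 2\eta t$; LNE at infinity gives $d_X(\gamma_1(t),\gamma_2(t))\le C\|\gamma_1(t)-\gamma_2(t)\|$ for large $t$; and since both $\gamma_i(t)$ project to $tv_0$ with $E=C_\infty(X)$, the fibers are asymptotically tangent to $E$, so $\|\gamma_1(t)-\gamma_2(t)\|/t\to 0$. Together these force $2\eta\le C\cdot o(1)\to 0$, a contradiction; hence $\deg(P|_X)=1$, the holomorphic inverse of $P|_X$ extends across $\Sigma$ to an entire map $E\to X$ of linear growth (because $\|y\|\le C'\|x\|$ on $X$ outside a compact set), and Liouville's Theorem makes it affine, so $X$ is an affine linear subspace.

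The content therefore lies in the reduction step. The natural tool is the monotonicity formula: the area ratio $r\mapsto \mathcal{H}^{2d}(X\cap \overline{B}_r(0))/r^{2d}$ is nondecreasing because $X$ is a calibrated (hence area-minimizing) complex variety, so its limit $\Theta_\infty$ exists in $(0,+\infty]$, and by Stoll--Bishop's Theorem \ref{Stoll-Bishop} one has $\Theta_\infty<+\infty$ if and only if $X$ is algebraic. By monotonicity $\Theta_\infty=\lim_j \mathcal{H}^{2d}(X\cap \overline{B}_{t_j}(0))/t_j^{2d}$, so it suffices to bound the area ratio along the single sequence $T$. I would try to do this by a blow-down analysis of the currents $\tfrac{1}{t_j}[X]$: if their masses stay bounded they subconverge to a positive holomorphic $d$-chain on a cone, and I would aim to use LNE — which passes uniformly to the rescalings $\tfrac{1}{t_j}X$ — to identify that cone with $E$ carrying multiplicity one, yielding $\Theta_\infty=\mu_{2d}<+\infty$; one then obtains algebraicity, the unique tangent cone from the $(1)\Rightarrow(2)$ part of Theorem \ref{chow-type-thm}, and $C_\infty(X)=C_\infty^T(X)=E$.

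The hard part, and the reason this remains a conjecture, is that $C_\infty^T(X)=E$ is a statement about \emph{full-sequence} limits along $T$: passing to a subsequence can only enlarge the set of tangent directions, so by itself the hypothesis does not rule out branches of $X$ escaping transversally to $E$ — bulging in the $y$-directions at scales strictly between the $t_j$ — which could force $\Theta_\infty=+\infty$ and defeat the current compactness argument. This is exactly where LNE at infinity must be brought to bear: a transversal excursion that returns near $E$ creates two nearby points joined only by a long arc inside $X$, i.e.\ a large inner-to-outer distance ratio, contradicting LNE. The parabola $\{(z,z^2)\}$, whose tangent cone at infinity is the linear $y$-axis but which \emph{fails} LNE (its two sheets over $E$ are at outer distance $\sim\sqrt{|w|}$ yet inner distance $\sim|w|$), shows both that the linear-tangent-cone hypothesis alone is insufficient and that this step cannot avoid LNE. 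Turning this heuristic into a theorem — excluding all transversal escape from the single-sequence condition together with LNE, and thereby closing the reduction step — is the crux I would expect to be the main obstacle.
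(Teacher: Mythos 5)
This statement is posed in the paper as an open conjecture, so there is no proof of it in the paper to compare against; what you have written is a strategy outline rather than a proof, and you say so yourself. The rigidity half of your plan is sound: once one knows that $X$ is a complex algebraic set whose unique tangent cone at infinity is the linear space $E$, your rerun of Claim \ref{claim:degree} with the LNE-at-infinity inequality $d_X(p,q)\le C\|p-q\|$ replacing the bi-Lipschitz transfer of the inner distance does give $\deg(P|_X)=1$, and the Liouville argument then finishes exactly as in the proof of Theorem \ref{main theorem}. But the reduction step, which is where all the content of the conjecture lies, is not established, and as written it is circular: to run the blow-down/current-compactness argument you need the masses of $\tfrac{1}{t_j}[X]$ to be bounded, i.e.\ precisely the volume-growth bound $\mathcal{H}^{2d}(X\cap \overline{B}_{t_j}(0))\le R\, t_j^{2d}$ that Stoll--Bishop's Theorem (Theorem \ref{Stoll-Bishop}) would convert into algebraicity. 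Neither the hypothesis $C_{\infty}^T(X)=E$ (a Hausdorff-type limit statement, which carries no multiplicity or mass information) nor LNE at infinity is shown to imply that bound, and your appeal to ``LNE passes uniformly to the rescalings and identifies the limit cone with multiplicity one'' presupposes that a limit current exists, which is again the missing mass bound.

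Concretely, the step that would fail if one tried to execute your plan is the implication ``LNE at infinity plus a linear tangent cone along one sequence $\Rightarrow$ finite density at infinity.'' The heuristic you offer --- that a transversal excursion returning near $E$ forces a large inner-to-outer distance ratio --- controls the topology of individual sheets but does not by itself bound the \emph{number} of sheets of $X$ over $E$ at scale $t_j$, which is what drives the area ratio; an entire (non-algebraic) analytic set could a priori have more and more sheets over larger and larger balls while each sheet individually stays metrically tame. Until one either bounds the sheet number using LNE, or finds another route to the estimate of Theorem \ref{Stoll-Bishop}, the conjecture remains open, and your proposal should be read as a correct identification of the obstacle rather than a resolution of it.
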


\subsection{Lipschitz regularity at infinity: Non-parametric version}


\begin{theorem}\label{gen_Moser_one}
Let $X\subset\C^{d+n}$ be a $d$-dimensional entire complex analytic subset and $K\subset \C^d$ be a compact set such that $X\setminus (K\times \C^n)$ is the graph of a mapping $\varphi\colon \C^d\setminus K\to \C^n$. 
Assume that 
\begin{itemize}
 \item [(i)] $K=\emptyset$ when $d=1$,
\end{itemize}
and suppose that there exist a constant $C>0$, $k\in \mathbb{N}$ and a sequence $\{t_j\}_{j\in \N}$ of real positive numbers such that $t_j\to +\infty$ satisfying the following
\begin{itemize}
 \item [(ii)] $\|\varphi(x)\|\leq C t_j^k$, whenever $x\in \{u\in \C^d\setminus K; \|u\|= t_j\}$ and
 \item [(iii)] $\varphi|_{B_{t_j}(0)\setminus K}$ is a bounded mapping, for all $j\in \N$.
\end{itemize}
Then $X$ contains a $d$-dimensional complex algebraic set of $\C^{d+n}$ and $\varphi$ is the restriction of a polynomial mapping with degree at most $k$. Additionally, if $X$ is also an irreducible set then $X$ is a complex algebraic set with degree at most $k$ and, in particular, if $k=1$ then $X$ is an affine linear subspace of $\C^{d+n}$.
\end{theorem}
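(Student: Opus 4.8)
The plan is to promote the defining map $\varphi$ to a genuine holomorphic map on $\C^d\setminus K$, extend it to an entire map $\tilde\varphi\colon\C^d\to\C^n$ which the growth hypothesis forces to be a polynomial of degree at most $k$, and then recover $X$ (or a $d$-dimensional piece of it) as the graph of $\tilde\varphi$. The whole difficulty is concentrated in the very first step: turning the bare hypothesis ``$X\cap((\C^d\setminus K)\times\C^n)$ is the graph of a map $\varphi$ that is not even assumed continuous'' into holomorphy of $\varphi$. Everything afterwards is standard several-variable complex analysis.

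For that first step, set $V:=X\cap((\C^d\setminus K)\times\C^n)$, a complex analytic subset over which the first projection $P\colon (x,w)\mapsto x$ is a bijection onto $\C^d\setminus K$. This is exactly where hypothesis (iii) is used: since $\varphi$ is bounded on each $B_{t_j}(0)\setminus K$, for every compact $L\subset\C^d\setminus K$ the set $P^{-1}(L)=V\cap(L\times\C^n)$ is bounded, and being closed in $\C^{d+n}$ it is compact, so $P|_V$ is proper. A proper holomorphic bijection onto the smooth manifold $\C^d\setminus K$ is a homeomorphism, and off the thin, lower-dimensional locus over which $P|_V$ fails to be a local biholomorphism the inverse is holomorphic; hence the continuous map $\varphi=\pi_2\circ(P|_V)^{-1}$ is holomorphic on $\C^d\setminus K$ by Riemann's removable singularity theorem. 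Without the properness supplied by (iii) one has no control on the inverse of $P|_V$, which is precisely why removing (iii) breaks the theorem.

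Next I would extend $\varphi$ across $K$. For $d\ge 2$ the set $\C^d\setminus K$ is connected and Hartogs' extension theorem extends each component of $\varphi$ to an entire function, producing an entire $\tilde\varphi\colon\C^d\to\C^n$ with $\tilde\varphi|_{\C^d\setminus K}=\varphi$; for $d=1$ hypothesis (i) gives $K=\emptyset$, so $\varphi$ is already entire, and since Hartogs fails in dimension one this is exactly why (i) cannot be dropped. Then I would use (ii): for large $j$ the sphere $\{\|x\|=t_j\}$ lies outside $K$, so $\tilde\varphi=\varphi$ there and $\|\tilde\varphi\|\le Ct_j^k$ on it; applying the maximum principle componentwise gives $\|\tilde\varphi\|\le C't_j^k$ on the whole ball $\overline{B_{t_j}(0)}$. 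Feeding this into the Cauchy estimates for the Taylor coefficients $a_\alpha$ of each component yields $|a_\alpha|\le C''\,t_j^{\,k-|\alpha|}$, and letting $t_j\to+\infty$ forces $a_\alpha=0$ whenever $|\alpha|>k$. Thus $\tilde\varphi$ is a polynomial map of degree at most $k$ and $\varphi$ is its restriction.

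Finally I would transfer this back to $X$. Let $G=\{(x,\tilde\varphi(x)):x\in\C^d\}$ be the graph of the polynomial $\tilde\varphi$; it is a smooth, irreducible, $d$-dimensional affine algebraic set. Since $\tilde\varphi=\varphi$ on $\C^d\setminus K$, the set $X$ contains $G\cap((\C^d\setminus K)\times\C^n)$, a nonempty open subset of the irreducible $G$; as $X\cap G$ is then a closed analytic subset of $G$ containing a nonempty open set, the identity principle gives $G\subseteq X$. This yields both assertions of the first part: $X$ contains the $d$-dimensional algebraic set $G$, and $\varphi$ is the restriction of a polynomial of degree at most $k$. If moreover $X$ is irreducible, then $G\subseteq X$ with both irreducible of dimension $d$ forces $X=G$, so $X$ is algebraic of degree at most $k$ (the degree of the graph of a degree-$k$ polynomial map); and when $k=1$ the map $\tilde\varphi$ is affine, whence $X=G$ is an affine linear subspace of $\C^{d+n}$.
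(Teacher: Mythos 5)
Your proof follows essentially the same route as the paper's: properness of the projection onto $\C^d$ over the complement of a compact set, the resulting degree-one branched cover making $\varphi$ holomorphic off a thin set and hence everywhere by Riemann removability, Hartogs plus the maximum principle and Cauchy estimates to produce a polynomial extension of degree at most $k$, and irreducibility of the graph $G$ to conclude $G\subseteq X$ (the paper merely reorganizes the maximum-principle step, applying it to the coordinate functions on $X$ itself before extending, rather than to the entire extension afterwards). The one point to tighten is the Hartogs step: $\C^d\setminus K$ need not be connected for $d\ge 2$, so one should (as the paper does) first enlarge $K$ to a closed ball $B$ and extend from the connected set $\C^d\setminus B$; the agreement of $\tilde\varphi$ with $\varphi$ on all of $\C^d\setminus K$ then follows a posteriori from $G\subseteq X$ together with the graph hypothesis, exactly as in your final paragraph.
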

\begin{remark}
In Theorem \ref{gen_Moser_one}, 
we do not impose that $\varphi$ is a continuous mapping and, in particular, 
$X$ is not necessarily graph of smooth functions; a priori, it is not supposed even smooth. Moreover, no restrictions on the singular set, dimension nor codimension are required. 
\end{remark}

\begin{proof}[Proof of Theorem \ref{gen_Moser_one}]
By taking subsequence, if necessary, we may assume that $K \subset B_{t_1}(0)$ and $t_j< t_{j+1}$ for all $j$.
We consider the orthogonal projection $P\colon \C^n\rightarrow \C^d.$ For each $i\in\{1,...,2n\}$, we consider the coordinate function $\phi_i\colon X\to \R$ given by $\phi_i(r_1,...,r_{2n})=r_i$. 

\begin{claim}\label{inequality}
$X \cap \{(x,y)\in \C^{d+n}; t_1\leq \|x\|\leq t_j\}\subset X \cap \{(x,y)\in \C^{d+n}; \|y\|\leq 2nC t_j^k\}$  for all $j$.
\end{claim}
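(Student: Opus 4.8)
The plan is to bound, uniformly over the closed annulus $\{t_1\le\|x\|\le t_j\}$ in the source space, each of the $2n$ coordinate functions $\phi_i$, and then recombine them. First I would observe that since $K\subset B_{t_1}(0)$, every point $(x,y)\in X$ with $t_1\le\|x\|\le t_j$ has $x\notin K$, so it lies on the graph and satisfies $y=\varphi(x)$. Writing $P\colon\C^{d+n}\to\C^d$ for the projection onto the $x$-coordinates, the set $A:=X\cap P^{-1}(\{t_1<\|x\|<t_j\})$ is exactly the graph of $\varphi$ over the open annulus. Because $\|y\|^2=\sum_{i=1}^{2n}\phi_i(x,y)^2$, we have $\|y\|\le\sum_{i=1}^{2n}|\phi_i|\le 2n\max_i|\phi_i|$, so it suffices to prove $|\phi_i(x,y)|\le Ct_j^k$ for every $(x,y)\in X$ with $t_1\le\|x\|\le t_j$ and every $i$.

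The main tool is the maximum principle on the analytic set $X$. Each complex coordinate $y_1,\dots,y_n$ restricts to a holomorphic function on $X$, so each real coordinate $\phi_i$ is the real or imaginary part of a holomorphic function; hence $\phi_i$ is continuous on all of $X$ and pluriharmonic (in particular plurisubharmonic) on the regular locus, and such functions satisfy the maximum principle on $X$ via density of the regular part. I would apply it to the relatively compact open set $A$. Here condition (iii) is essential: it guarantees that $\varphi$ is bounded on $B_{t_j}(0)\setminus K$, and together with (ii) on the outer sphere this makes $\overline A=X\cap P^{-1}(\{t_1\le\|x\|\le t_j\})$ a bounded, hence compact, subset of the closed set $X$. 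Consequently $\max_{\overline A}|\phi_i|$ is attained and, by the maximum principle, it is attained on the topological boundary $\partial A\subseteq X\cap P^{-1}(\{\|x\|=t_1\}\cup\{\|x\|=t_j\})$.

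It then remains to estimate $|\phi_i|$ on $\partial A$. Every boundary point lies over a sphere $\|x\|=t_1$ or $\|x\|=t_j$ and, being outside $K$, belongs to the graph; hence by condition (ii) we get $\|y\|=\|\varphi(x)\|\le Ct_1^k\le Ct_j^k$ on the inner sphere and $\|y\|\le Ct_j^k$ on the outer sphere. Since $|\phi_i|\le\|y\|$, this gives $|\phi_i|\le Ct_j^k$ on $\partial A$, and therefore on all of $\overline A$ by the maximum principle; on the two boundary spheres themselves the bound holds directly. Combining with $\|y\|\le 2n\max_i|\phi_i|$ yields $\|y\|\le 2nCt_j^k$ for every $(x,y)\in X$ with $t_1\le\|x\|\le t_j$, which is the claim.

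The step I expect to be the main obstacle is the rigorous application of the maximum principle on the possibly singular (and not necessarily pure-dimensional) set $X$, over a region whose boundary is cut out by the real function $\|x\|$ rather than by a complex-analytic equation. The subtle points are: verifying that $\overline A$ is genuinely compact, which is exactly where hypothesis (iii) enters by preventing $\varphi$ from escaping to infinity inside the ball; confirming that the topological boundary of $A$ in $X$ really sits over the two spheres only; and invoking the maximum principle for pluriharmonic functions on a singular analytic set. Everything else reduces to the routine recombination $\|y\|\le 2n\max_i|\phi_i|$.
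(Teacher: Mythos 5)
Your argument is essentially the paper's own proof, merely phrased directly rather than by contradiction: both reduce to bounding the real coordinate functions $\phi_i$ of the $y$-part, both use hypothesis (iii) to get compactness of $X\cap\{t_1\le\|x\|\le t_j\}$, both invoke the maximum principle on the analytic set to push the estimate to the two spheres $\|x\|=t_1$ and $\|x\|=t_j$, and both conclude via (ii) and the recombination $\|y\|\le 2n\max_i|\phi_i|$. The proposal is correct and matches the paper's approach.
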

\begin{proof}[Proof of Claim \ref{inequality}]
Indeed, if Claim \ref{inequality} is not true, there exist $j>1$ and $(x_0,y_0)\in X$ such that $t_1\leq \|x_0\|\leq t_j$ and $\|y_0\|> 2nC t_j^k$. Therefore, there exists a coordinate function $\phi_i:X\to \R$ such that $|\phi_i(x_0,y_0)|>Ct_j^k$. However, by item (iii), it follows that $X \cap \{(x,y)\in \C^{d+n}; t_1\leq \|x\|\leq t_j\}$ is a compact subset and, thus, by Maximum Principle (see Chirka \cite{Chirka:1989}, page 72, Theorem), the maximum of $\phi_i$ restrict to $X \cap \{(x,y)\in \C^{d+n}; t_1\leq \|x\|\leq t_j\}$ is attained in 
$$
M_j:=X \cap (\{(x,y)\in \C^{d+n}; \|x\|= t_j\}\cup \{(x,y)\in \C^{d+n}; \|x\|= t_1\}).
$$
Since $|\phi_i(w)|\leq Ct_j^k$ for all $w\in M_j$ and $(x_0,y_0)\in X \cap \{(x,y)\in \C^{d+n}; t_1\leq \|x\|\leq t_j\}$ satisfies $|\phi_i(x_0,y_0)|>Ct_j^k$, we obtain a contradiction. Therefore, Claim \ref{inequality} holds true.
\end{proof}


By Claim \ref{inequality}, we obtain that $\pi:=P|_{X\setminus (B\times \C^n)}\colon X\setminus (B\times \C^n)\to \C^d\setminus B$ is a proper mapping, where $B=\overline{B_{t_1}^{2d}(0)}$ if $d>1$ or $B=\emptyset$ if $d= 1$. Then, $\pi$ is a ramified cover and the critical values set of $\pi$, $\Sigma\subset \C^d\setminus B$, is a codimension $\geq 1$ complex analytic subset of the space $\C^d$. In particular, $\pi$ restrict to $X\setminus ((B\times \C^n)\cup \pi^{-1}(\Sigma))$ is a local diffeomorphism. Thus, $\varphi|_{\C^d\setminus (B\cup \Sigma) }\colon \C^d\setminus (B\cup \Sigma) \to X\setminus ((B\times \C^n)\cup \pi^{-1}(\Sigma))$ is a complex analytic mapping. We know that $\Sigma$ is a removable set, then $\varphi|_{\C^d\setminus (B\cup \Sigma) }$ can be extended to a complex analytic mapping $F=(f_1,...,f_n)\colon \C^d\setminus B\to X\setminus (B\times \C^n)$, since by hypotheses, $\varphi$ is locally bounded. If $d=\dim X>1$, then $F$ can be extended to a complex analytic mapping $\tilde F=(\tilde f_1,...,\tilde f_n)\colon \C^d \to \C^n$ and if $d=1$ then $B=\emptyset$ and, thus, $F$ is already defined in the whole $\C^d$ and, in this case, we set $\tilde F:=F$.

Since $\tilde F$ is bounded in $B$, by Claim \ref{inequality}, there exists $M>0$ such that each $\tilde f_i$ satisfies $|\tilde f_i(x)|\leq M t_j^k$ for all $x\in B^d_{t_j}(0)$ and for all $j$.

\begin{claim}\label{claim:Liouville_version}
$\tilde F$ is an polynomial mapping of degree at most $k$.
\end{claim}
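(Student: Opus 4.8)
The plan is to prove Claim \ref{claim:Liouville_version} as a several--variables version of Liouville's theorem, applying Cauchy's integral estimates to the Taylor coefficients of each entire function $\tilde f_i$. Since each $\tilde f_i\colon \C^d\to \C$ is holomorphic on all of $\C^d$, I would expand it in its Taylor series about the origin,
$$
\tilde f_i(z)=\sum_{\alpha\in \N^d}c_\alpha^{(i)}z^\alpha,
$$
a series converging on the whole of $\C^d$. The goal is to show that $c_\alpha^{(i)}=0$ whenever $|\alpha|>k$, which immediately gives that each $\tilde f_i$ is a polynomial of degree at most $k$, and hence that $\tilde F=(\tilde f_1,\dots,\tilde f_n)$ is a polynomial mapping of degree at most $k$.

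First I would transfer the bound established just before the claim from balls to polydiscs. Since the polydisc $\{z\in\C^d;\ |z_1|,\dots,|z_d|\le t_j/\sqrt{d}\}$ is contained in the ball $B^d_{t_j}(0)$, the inequality $|\tilde f_i(x)|\le M t_j^k$ valid on $B^d_{t_j}(0)$ yields the same bound on that inscribed polydisc. Applying the $d$--fold Cauchy integral formula on the distinguished boundary $\{|z_l|=t_j/\sqrt{d}\}$ then produces the coefficient estimate
$$
|c_\alpha^{(i)}|\le \frac{M t_j^k}{(t_j/\sqrt{d})^{|\alpha|}}=M\,d^{|\alpha|/2}\,t_j^{\,k-|\alpha|}.
$$

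Finally, I would let $j\to\infty$. Because $t_j\to +\infty$, for every multi-index $\alpha$ with $|\alpha|>k$ the exponent $k-|\alpha|$ is strictly negative, so the right-hand side tends to $0$ while the left-hand side is a fixed constant; hence $c_\alpha^{(i)}=0$ for all such $\alpha$ and all $i$. This shows each $\tilde f_i$ is a polynomial of degree at most $k$, proving the claim. The argument is essentially routine: the only point needing minor care is the passage from the ball bound to the polydisc Cauchy estimate (the $\sqrt{d}$ factor), so that the inscribed polydisc radius still grows proportionally to $t_j$ and the decay $t_j^{\,k-|\alpha|}$ takes effect. No single step is a genuine obstacle, since the estimate $|\tilde f_i|\le M t_j^k$ on $B^d_{t_j}(0)$ provides exactly the input that a generalized Liouville theorem requires.
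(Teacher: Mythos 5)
Your proposal is correct and follows essentially the same route as the paper: both are generalized Liouville arguments that feed the bound $|\tilde f_i|\le M t_j^k$ on $B^d_{t_j}(0)$ into Cauchy estimates and let $j\to\infty$ to kill all terms of order greater than $k$. The only cosmetic difference is that you estimate Taylor coefficients at the origin over inscribed polydiscs, while the paper estimates the derivatives $D^{\alpha}f(x)$ at an arbitrary point over balls $B_{t_j-\|x\|}(x)$; both computations are valid and yield the same conclusion.
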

\begin{proof}[Proof of Claim \ref{claim:Liouville_version}]
Fixed $i$, let $f=\tilde f_i$.
For each $x\in \mathbb{C}^d$, let $j_0\in \mathbb{N}$ such that $x\in B_{t_j}(0)$ for all $j\geq j_0$. Since $f$ satisfies $\|f\|_{L^{\infty}(B_{t_j}(0))}\leq M t_j^k$ for all $j\geq j_0$, by Cauchy's estimate, for each multi-index $\alpha$, we have
\begin{eqnarray*}
\left|D^{\alpha} f(x)\right|&\leq &\frac{\alpha !}{(2\pi i)^d }  \frac{\|f\|_{L^{\infty}(B_{t_j-\|x\|}(x))}}{(t_j-\|x\|)^{|\alpha|}} \\
&\leq &\frac{\alpha !}{(2\pi i)^d } \frac{\|f\|_{L^{\infty}(B_{t_j}(0))}}{(t_j-\|x\|)^{|\alpha|}} \\
&\leq &\frac{\alpha !}{(2\pi i)^d }\frac{Mt_j^k}{(t_j-\|x\|)^{|\alpha|}} 
\end{eqnarray*}
for all $j\geq j_0$, which implies $\left|D^{\alpha} f(x)\right|=0$, for any $\alpha$ satisfying $|\alpha|>k$.
Therefore $f$ is a polynomial of degree at most $k$. 
Thus, $\tilde F$ is a polynomial mapping of degree at most $k$.
\end{proof}

Thus, $Graph (\tilde F)$ is a complex algebraic set with degree at most $k$.
Since $X\setminus (B\times \C^n) \subset Graph (\tilde F)$ and $Graph (\tilde F)$ is locally irreducible, we have that $Graph(\tilde F)\subset X$. Moreover, since $\dim X=\dim Graph(\tilde F)$, if $X$ is an irreducible analytic set then $X=Graph(\tilde F)$ and, in this case, if we have also $k=1$ then $X$ is an affine linear subspace of $\C^{d+n}$.
\end{proof}

\begin{remark}
Claim \ref{claim:Liouville_version} of the proof of Theorem  \ref{gen_Moser_one} holds true also for a harmonic function, i.e., if  $u\colon \mathbb{R}^m\to \mathbb{R}$ is a harmonic function that satisfies $|u(x)|\leq C t_j^k$ for all $x\in B^m_{t_j}(0)$ and for all $j$ and some $C\geq 0$ and $k\in \mathbb{N}$ then $u$ is a polynomial of degree at most $k$.
\end{remark}

\begin{remark}\label{rem:gen_Moser_one_sharp}
Theorem \ref{gen_Moser_one} does not hold true in general, if we remove one of the conditions (i)-(iii). In fact, consider the sequence $\{t_j\}$ given by $t_j=j+1$ for all $j$. Let $X=\{(x,y)\in \C^2;xy=1\}\cup\{(0,0)\}$ and $\varphi\colon \C\to \C$ be the function given by 
$$
\varphi(x)=\left\{\begin{array}{ll}
            1/x,& \mbox{ if }x\not=0\\
            0,& \mbox{ if }x=0.
           \end{array}\right.
$$
Then, $X=Graph(\varphi)$ and $|\varphi(x)|\leq |x|$ for all $x\in \C\setminus B_1^2(0)$, which implies that $\varphi$ satisfies (i) and (ii), but $X$ does not contain a line. Thus, we cannot remove condition (iii). By taking $\varphi|_{\C\setminus \overline{B_1^2(0)}}$, we see that condition (i) also cannot be removed. The set $Y=\{(x,y)\in \C^2;y=e^x\}$ clearly satisfies (i) and (iii), which shows that we cannot remove condition (ii). 
\end{remark}
\begin{remark}\label{rem:no_real_gen_Moser_one}
As it was already remarked in the introduction, Theorem \ref{gen_Moser_one} does not hold true for closed sets which minimizes volume (stable varieties), see Example \ref{example:LO}. 
\end{remark}

%

However, we have the following complex version of the result proved by Moser in \cite{Moser:1961}.
\begin{corollary}\label{gen_Moser_two}
Let $X\subset\C^{d+n}$ be a complex analytic subset. If $X$ is a graph of a Lipschitz mapping $\varphi\colon \C^d\to \C^n$, then $X$ is an affine linear subspace of $\C^{n+d}$.
\end{corollary}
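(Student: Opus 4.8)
The plan is to obtain the corollary as a direct specialization of Theorem \ref{gen_Moser_one}, taking $K=\emptyset$ and $k=1$. First I would set up the hypotheses. Since $\varphi$ is Lipschitz, fixing a Lipschitz constant $\lambda$ yields the linear growth bound
$$
\|\varphi(x)\|\le \lambda\|x\|+\|\varphi(0)\|,\qquad x\in\C^d .
$$
A Lipschitz map is continuous, so its graph is closed; hence $X=\mathrm{Graph}(\varphi)$ is a closed complex analytic subset of $\C^{d+n}$, therefore entire, and the projection onto $\C^d$ restricts to a bijection of $X$, so $X$ is $d$-dimensional. This matches the standing assumptions of Theorem \ref{gen_Moser_one}.

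Next I would verify conditions (i)--(iii) for the choices $K=\emptyset$, $k=1$, $t_j=j$ (for $j\in\N$), and $C=\lambda+\|\varphi(0)\|$. Condition (i) holds vacuously because $K=\emptyset$. For (ii), on each sphere $\{\|x\|=t_j\}$ with $t_j=j\ge 1$ the growth bound gives $\|\varphi(x)\|\le \lambda j+\|\varphi(0)\|\le (\lambda+\|\varphi(0)\|)\,j=C\,t_j=C\,t_j^{\,k}$. For (iii), the continuity of $\varphi$ (or again the growth bound) shows that $\varphi$ is bounded on each ball $B_{t_j}(0)$.

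Theorem \ref{gen_Moser_one} then applies with $k=1$ and tells us that $\varphi$ is the restriction of a polynomial mapping of degree at most $1$, that is, an affine mapping. Because here the domain of $\varphi$ is all of $\C^d$ (as $K=\emptyset$), $\varphi$ itself is affine, say $\varphi(x)=Ax+b$ for a linear map $A$ and a vector $b$. Consequently $X=\{(x,Ax+b):x\in\C^d\}$ is the image of the injective affine embedding $x\mapsto(x,Ax+b)$, hence an affine linear subspace of $\C^{d+n}$, as claimed.

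I do not anticipate a real obstacle, since the entire difficulty is packaged inside Theorem \ref{gen_Moser_one}. The only steps that deserve care are translating the single Lipschitz inequality into the two separate growth and boundedness conditions (ii) and (iii), and observing that the degree-one conclusion together with $K=\emptyset$ forces $\varphi$ to be globally affine. This lets the graph be recognized as a linear subspace directly, so that one need not invoke the irreducibility clause in the conclusion of the theorem.
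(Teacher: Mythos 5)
Your proposal is correct and is exactly the intended derivation: the paper states Corollary \ref{gen_Moser_two} as an immediate consequence of Theorem \ref{gen_Moser_one}, obtained by taking $K=\emptyset$ and $k=1$ and using the Lipschitz inequality to verify the growth and boundedness hypotheses (ii) and (iii). Your additional observations (that the graph is closed, hence an entire $d$-dimensional analytic set, and that with $K=\emptyset$ the degree-one conclusion makes $\varphi$ globally affine without invoking irreducibility) are accurate and fill in the routine details.
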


%

%


\bigskip



\end{document}